\pgfplotsset{compat=newest}
\newcommand{\re}{\mathbb{R}}
\newcommand{\N}{\mathbb{N}}
\def\af{\alpha}
\def\rank{\mbox{rank}}
\newcommand{\st}{\mbox{s.t.}}
\newcommand{\reff}[1]{(\ref{#1})}
\newcommand{\mc}[1]{\mathcal{#1}}
\newcommand{\qmod}[1]{\mathit{QM}[#1]}
\newcommand{\pre}[1]{\mathit{Pre}[#1]}
\newcommand{\bdes}{\begin{description}}
\newcommand{\edes}{\end{description}}
\newcommand{\bal}{\begin{align}}
\newcommand{\eal}{\end{align}}
\newcommand{\bnum}{\begin{enumerate}}
\newcommand{\enum}{\end{enumerate}}
\newcommand{\bit}{\begin{itemize}}
\newcommand{\eit}{\end{itemize}}
\newcommand{\bea}{\begin{eqnarray}}
\newcommand{\eea}{\end{eqnarray}}
\newcommand{\be}{\begin{equation}}
\newcommand{\ee}{\end{equation}}
\newcommand{\baray}{\begin{array}}
\newcommand{\earay}{\end{array}}
\newcommand{\bsry}{\begin{subarray}}
\newcommand{\esry}{\end{subarray}}
\newcommand{\bca}{\begin{cases}}
\newcommand{\eca}{\end{cases}}
\newcommand{\bcen}{\begin{center}}
\newcommand{\ecen}{\end{center}}
\newcommand{\bbm}{\begin{bmatrix}}
\newcommand{\ebm}{\end{bmatrix}}
\newcommand{\bmx}{\begin{matrix}}
\newcommand{\emx}{\end{matrix}}
\newcommand{\bpm}{\begin{pmatrix}}
\newcommand{\epm}{\end{pmatrix}}
\newcommand{\btab}{\begin{tabular}}
\newcommand{\etab}{\end{tabular}}
\newtheorem{theorem}{Theorem}[section]
\newtheorem{prop}[theorem]{Proposition}
\newtheorem{corollary}[theorem]{Corollary}
\newtheorem{ass}[theorem]{Assumption}
\theoremstyle{definition}
\newtheorem{example}[theorem]{Example}
\newtheorem{algorithm}[theorem]{Algorithm}
\numberwithin{equation}{section}
\begin{document}

\title{Towards Global Solutions for Nonconvex Two-Stage Stochastic Programs: 
A Polynomial Lower Approximation Approach}

\author{Suhan Zhong \thanks{Department of Mathematics,
Texas A\&M University, College Station, TX 77843 (suzhong@tamu.edu).
} \and
Ying Cui \thanks{
Department of Industrial Engineering and Operations Research, 
University of California, Berkeley,  Berkeley, CA  94720 (yingcui@berkeley.edu). 
The author is partially supported by the National Science Foundation under grants CCF-2153352 and DMS-2309729, and the National Institutes of Health under grant R01CA287413-01.}
\and Jiawang Nie \thanks{Department of Mathematics,
University of California San Diego, La Jolla, CA 92093 (njw@math.ucsd.edu).
The author is partially supported by the National Science Foundation under grant  DMS-2110780.}}
\date{}

\maketitle

\begin{abstract}
This paper tackles the challenging problem of finding global optimal solutions for 
two-stage stochastic programs with continuous decision variables and nonconvex 
recourse functions. We introduce a two-phase approach. The first phase involves the
construction of a polynomial lower bound for the recourse function through a linear
optimization problem over a nonnegative polynomial cone. Given the complex structure 
of this cone, we employ semidefinite relaxations with quadratic modules to facilitate 
our computations. In the second phase, we solve a surrogate first-stage problem by
substituting the original recourse function with the polynomial lower approximation
obtained in the first phase. Our method is particularly advantageous for two reasons: 
 it not only generates global lower bounds for the nonconvex stochastic program, 
aiding in the certificate of global optimality for prospective solutions like 
stationary solutions computed from other methods, but it also
yields an explicit polynomial approximation for the recourse function through the solution of a linear conic optimization problem, 
where the number of variables is independent of the support of the underlying random vector.
Therefore,  our approach  is particularly suitable for the 
case where the random vector follows a continuous distribution or when dealing with 
a large number of scenarios. Numerical experiments are conducted to demonstrate the
effectiveness of our proposed approach. 
\end{abstract}
{\bf Keywords:} two-stage stochastic programs, 
polynomial optimization, nonconvex, global solutions

\noindent
{\bf MSC Classification:} 90C23 , 65K05 , 90C15

\section{Introduction}
\label{sec: introduction}

Two-stage stochastic programs (SPs) with recourse functions serve as a powerful framework 
for modeling  decision-making problems under uncertainty. In the first stage, 
``here-and-now'' decisions are made prior to the uncertainty being revealed. 
Following this, the second stage accommodates additional decisions, often contingent 
on the outcomes of the uncertainty, and are referred as ``recourse actions".
The goal of two-stage SPs is to determine  decisions that minimize the expected 
total cost. Mathematically, a two-stage SP with recourse functions is formulated as
\begin{equation}\label{eq:2stageSP}
\left\{\begin{array}{cl}
\min\limits_{x\in \re^{n_1}} & 
f(x): = f_1(x)+\mathbb{E}_{\mu}[f_2(x,\xi)]\\[0.1in]
\st & x\in X \, \coloneqq \, \{x\in\re^{n_1}:  g_{1,i}(x)\ge 0\,(i\in\mc{I}_1)\},
\end{array}\right.
\end{equation}
where $\xi\in \mathbb{R}^{n_0}$ is a random vector associated with the probability 
measure $\mu$ supported on 
\begin{equation}\label{eq:setS1} 
S\,\coloneqq \,\{\xi\in\re^{n_0}: g_{0,i}(\xi)\ge 0\, (i\in \mc{I}_0)\}, 
\end{equation}
and $f_2(x,\xi)$ is the so called {\it recourse function} 
given by:
\begin{equation}\label{eq:second_stage}
\,\left\{
\begin{array}{rl}
f_2(x,\xi) := \min\limits_{y\in \mathbb{R}^{n_2}} 
& F(x,y,\xi)\\[0.05in]
\st\, \,& y\in Y(x,\xi)\,\coloneqq\, \{ y\in\re^{n_2}: 
g_{2,i}(x,y,\xi)\ge 0\,(i\in\mc{I}_2) \}.
\end{array}
\right.
\end{equation}
Here \reff{eq:2stageSP}--\reff{eq:second_stage} satisfy the following assumption.
\begin{ass}
The index sets $\mc{I}_0,\mc{I}_1$ and  $\mc{I}_2$ are finite, potentially empty.
The functions $g_{0,i}:\,\re^{n_0}\to\re$ for each $i\in \mc{I}_0$;
$f_1,\, g_{1,i}: \,\re^{n_1}\to\re$ for $i\in \mc{I}_1$; and
$F,\, g_{2,i}: \,\re^{n_1}\times \re^{n_2}\times \re^{n_0}\to \re$ for $i\in\mc{I}_2$.
\end{ass}
As a versatile modeling paradigm, two-stage SPs 
have found applications across numerous domains, such as supply chain management
\cite{GLRS11,LAC12}, energy systems \cite{DJ22,MMP23}, and transportation planning
\cite{HouSun19,PaulZhang19}, among others. For a comprehensive understanding of this
subject matter, readers are referred to the monographs \cite{Birgebook,Shapiro21book} 
and references therein. 

\vskip 0.05in
When  $f$ is a convex function and $X$ is a convex set,
problem \reff{eq:2stageSP} is  convex. Numerical methods for solving convex two-stage 
SPs have been extensively studied. When $\xi$ follows a discrete distribution or is
approximated by sample averages, \reff{eq:2stageSP} simplifies to a convex deterministic
problem, enabling the application of the $L$-shaped method \cite{VanWets69,Wets84}, 
the (augmented) Lagrangian method \cite{ParpasRustem07}, and the progressive hedging
method \cite{Guignard03,Rockafellar91}. In instances where $\xi$ follows a continuous
distribution, one may either directly employ stochastic approximation or utilize sample
average approximation to recast it into a deterministic formulation, subsequently
applying the aforementioned methods. Under technical assumptions, the (sub)sequences
generated by these algorithms converge to globally optimal solutions to the convex SPs.

\vskip 0.05in
Many real-world applications feature two-stage SPs that are inherently nonconvex.
Examples include the two-stage stochastic interdiction problem \cite{cormican1998stochastic,hao2020piecewise} 
and the stochastic program with decision-dependent uncertainty \cite{GoelGrossmann04,Hellemo18,LiCui22,LiuCui20,GrigasQi21}. 
In fact, the recourse function in the form of \eqref{eq:second_stage} easily 
becomes nonconvex in the first-stage variable $x$, even in the simple situation 
where the second-stage problem is linearly parameterized by $x$:
\[
\,\left\{
\begin{array}{rl}
f_2(x,\xi) = \min\limits_{y\in \mathbb{R}^{n_2}} 
& [\,c(\xi) + C(\xi) x\,]^T y\\[0.05in]
\st\, \,& A(\xi) x + B(\xi) y \ge b(\xi).
\end{array}
\right.
\]
It is important to note that the nonconvexity in the above problem does not arise 
from the integrality of decision variables $y$, and thus techniques from 
mixed-integer programming are not applicable here. For such problems, the focus 
in the existing literature is primarily on the efficient computation of local solutions, such as stationary points \cite{Borges21, LiCui22,LiuCui20}. 
Generally, it is challenging to compute global optimal solutions of nonconvex 
two-stage SPs as well as to certify the quality of a given point in terms of its global optimality.  

\vskip 0.05in
The primary goal of the present paper is to design a relaxation approach that can
asymptotically solve problem \reff{eq:2stageSP} to global optimality, under the setting
that the recourse function $f_2$ is nonconvex in $x$. 
Throughout this paper, we consider the two-stage SP in the form of \eqref{eq:2stageSP} satisfying the following condition.
\begin{ass}
The functions $F(x,y,\xi)$, $\{g_{1,i}(x)\}$, $\{g_{0,i}(\xi)\}$ and $\{g_{2,i}(x,y,\xi)\}$ are all polynomials in terms of the arguments $(x,y, \xi)$.
\end{ass}

One major challenge in globally
solving \eqref{eq:2stageSP} stems from the typical lack of an explicit parametric
representation of the recourse function $f_2(x,\xi)$. To overcome this difficulty, 
we introduce a two-phase algorithm. In the first phase, we construct a parametric
function $p(x,\xi)$ that serves as a lower approximation of the recourse function 
$f_2(x,\xi)$ over $X\times S$, satisfying
\begin{equation}\label{eq:lower bound}
f_2(x,\xi) - p(x,\xi) \ge 0,\quad \forall \,x\in X,\,\xi\in S.
\end{equation}
In the second phase, we replace $f_2(x,\xi)$ in problem \reff{eq:2stageSP} with the
approximating function $p(x,\xi)$ and solve the corresponding surrogate problem to 
global optimality. Given that $p$ provides a lower approximation of $f_2$ on its domain,
the global optimal value computed from the surrogate problem must be a lower bound of 
the true optimal value of problem \reff{eq:2stageSP}. Consequently, this computed value
also provides an estimate of the distance from the objective value at a local 
solution/stationary point that is obtained by any other methods to the true global
optimal value. In addition, we design a hierarchical procedure to asymptotically 
diminish the gap between $f_2(x,\xi)$ and $p(x,\xi)$ (in the ${\cal L}^1$ space), 
thereby ensuring that the objective value obtained from the surrogate problem converges to the true global optimal value of \reff{eq:2stageSP}.

\vskip 0.05in
To achieve our goal of finding the global optimal solution of the nonconvex two-stage SP,
we leverage techniques from polynomial optimization. It is well known that under the
archimedean condition, a generic polynomial optimization problem can be 
solved to global optimality through a hierarchy of Moment-Sum-of-Squares (Moment-SOS)
relaxations \cite{Las01}; see, for example, the monographs
\cite{Las09book,Lau14,NieBook}. Specifically, let us denote
\begin{equation}\label{defn:F and K} 
\mc{F}\,\coloneqq\, \{(x,\xi)\in X\times S: Y(x,\xi) \neq \emptyset\} \quad \mbox{and}\quad
K \,\coloneqq\, \{(x,y,\xi): (x,\xi)\in \mc{F}, y\in Y(x,\xi)\}.
\end{equation}
Then for any $(x,\xi)\in \mc{F}$, the inequality \eqref{eq:lower bound} is equivalent to
\begin{equation}\label{eq:F-p>0:int}
F(x,y,\xi) - p(x,\xi)\ge 0, \quad \forall \, y\in Y(x,\xi).
\end{equation}
Assuming that the functions $F$ and $g_{2,i}$ for $i\in {\cal I}_2$ in \eqref{eq:second_stage} are polynomials over $(x,y,\xi)$, we construct a polynomial function $p(x,\xi)$ such that 
 $F(x,y,\xi)-p(x,\xi)$ is a nonnegative polynomial  over $K$.
Obviously there are infinitely many polynomials satisfying the above condition. In order to approximate the recourse function $f_2(x,\xi)$ as tight as possible, we seek the one that is closest to it from below under a prescribed metric. Specifically, 
letting $\mathscr{P}(K)$ be the set of polynomials in $(x,y,\xi)$ that are nonnegative on $K$ and
 $\nu$ be a probability measure supported on $\mc{F}$, we solve for a best polynomial lower approximating function via the following problem  
\begin{equation}\label{eq:maxim:int}
\left\{
\begin{array}{cl}
\max\limits_{p} & \int_{\mc{F}} p(x,\xi) {\tt d}\nu\\
\st & F(x,y,\xi) - p(x,\xi)\in\mathscr{P}(K).
\end{array}
\right.
\end{equation}
When the degree of the polynomial $p(x,\xi)$  is fixed, the above problem reduces to a linear conic optimization in the coefficients of $p$. A noteworthy benefit of  problem \eqref{eq:maxim:int} is that the size of the decision variables are determined merely by the dimensions of $(x,\xi)$ and the degree of the polynomial $p$, while remaining unaffected by the distribution of $\xi$ or the number of samples used to approximate $\xi$'s distribution. This becomes particularly advantageous when there is a large number of scenarios for $\xi$. Even more appealingly, if $\xi$ follows a continuous distribution, there is no necessity  to draw samples to approximate its distribution in order to compute $\mathbb{E}_\nu [\,p(x,\xi)\,]$; it can instead be computed  analytically through the moments of $\xi$.

\begin{center}
\begin{figure}[htb]
\begin{center}
\begin{minipage}{.35\textwidth}
\centering
\includegraphics[width=\textwidth]{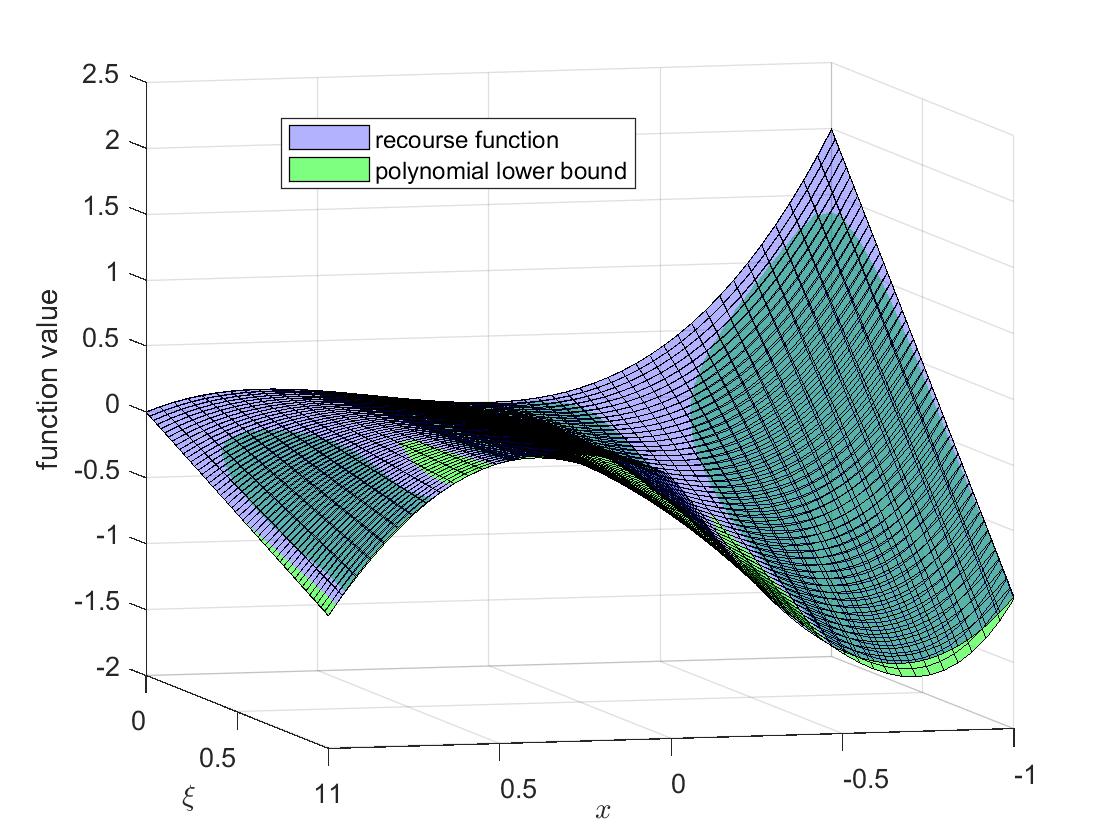}
\end{minipage}
\qquad
\begin{minipage}{.35\textwidth}
\centering
\includegraphics[width=\textwidth]{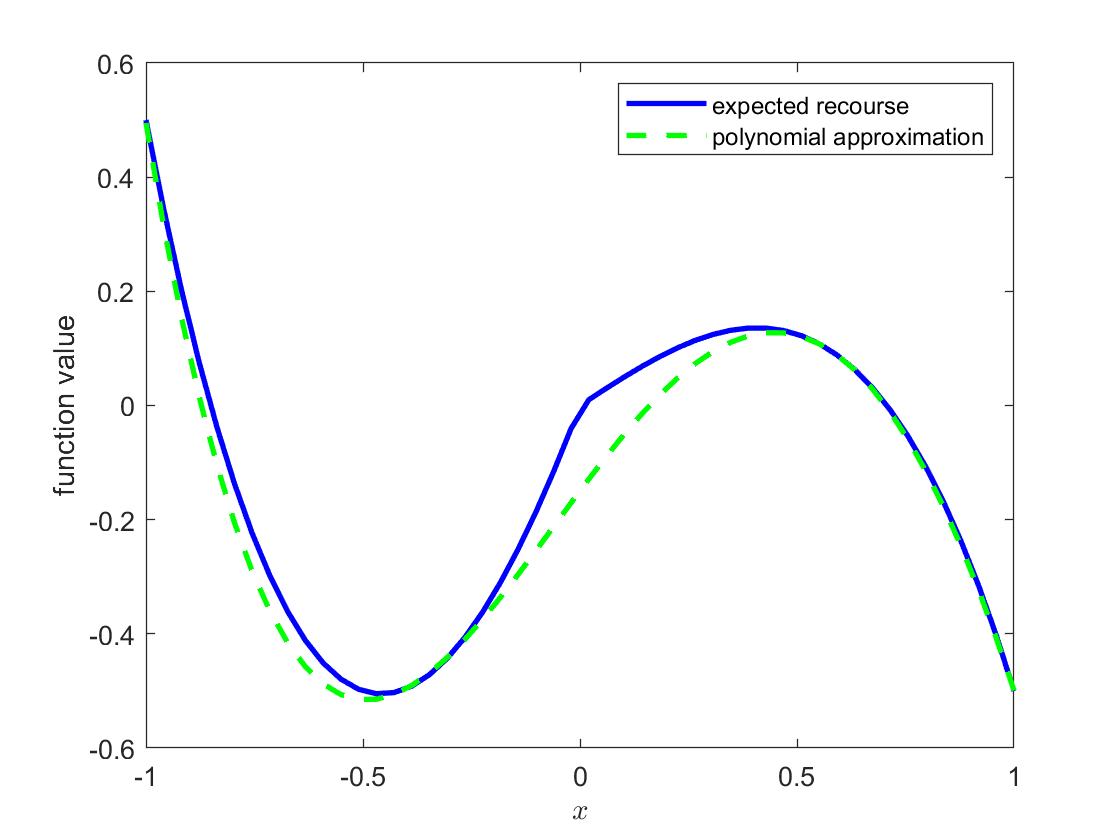}
\end{minipage}
\end{center}
\caption{An illustration of our approach. The left panel shows how a nonconvex recourse function $f(x,\xi)$ can be efficiently approximated from below by a polynomial $p(x,\xi)$. The right panel shows the expectation of the recourse and its approximation from the left (i.e., $\mathbb{E}_{\nu}[f(x,\xi)]$ vs $\mathbb{E}_{\nu}[p(x,\xi)]$) for a given measure $\nu$.}
\end{figure}
\end{center}

We outline the major advantages of our proposed approach below.
\begin{itemize}
\item[(a)]
Our method efficiently computes lower bounds for the global optimal value of problem  \reff{eq:2stageSP}, which can be particularly tight when the recourse function
is polynomial. These bounds can be used to certify the global optimality of prospective solutions like stationary solutions computed from other methods.
\item[(b)] 
The approach yields an explicit polynomial lower bound for the recourse function.
With certain assumptions of compactness and continuity, these polynomials can achieve an arbitrary level of accuracy in the $\mc{L}^1$ space relative to a given probability measure.
\item[(c)] The number of variables in problem \reff{eq:maxim:int} is independent 
of the distribution of $\xi$. 
Therefore, our approach is especially beneficial in instances where $\xi$ follows a continuous distribution or is approximated by a large 
number of scenarios.
\end{itemize}
The rest of this paper is organized as follows. Some notation and basic knowledge on 
polynomial optimization is introduced right after.
In Section~\ref{sec:plb}, we discuss the construction of polynomial lower approximation of the recourse function via linear conic 
optimization. 
Utilizing the derived polynomial lower approximating functions, we develop algorithms to approximately solve nonconvex two-stage SPs 
in Section~\ref{sc:paa},  and study their convergent properties. 
In Section~\ref{sc:lco}, 
the Moment-SOS relaxation methods are introduced to solve the subproblems arising from the 
algorithms in the previous section. Some numerical results are given 
in Section~\ref{sc:numexp}. The paper ends with a concluding section.

\section*{Notation and Preliminaries}
The symbol $\mathbb{R}$ denotes the set of real numbers and $\N$ denotes the set of nonnegative 
integers. The notation $\re^n$ (resp., $\N^n$) stands for the set of $n$-dimensional vectors with entries in 
$\re$ (resp., $\N$). For $t\in \mathbb{R}$, $\lceil t\rceil$ denotes the smallest integer that is not smaller 
than $t$. For an integer $k>0$, denote $[k] \coloneqq \{1,\cdots,k\}$. For a vector $v\in\re^n$, 
we use $\|v\|$ to denote its Euclidean norm. The superscript $^T$ denotes the transpose 
of a matrix or vector. Let $\Omega_1$ and $\Omega_2$ be two sets. Their Cartesian product is denoted as
$\Omega_1\times \Omega_2 \,  \coloneqq  \, \{(v_1, v_2): \, v_1\in \Omega_1, \, v_2\in \Omega_2\}$.
Let $\nu$ be a probability measure supported on $\Omega_1$ and $\mc{L}^1(\nu)$ denote 
the set of functions $f:\Omega_1\to \re$ such that $\int_{\Omega_1}|f|{\tt d}\nu<\infty$.
A matrix $A\in\re^{n\times n}$ is said to be positive semidefinite, denoted as $A\succeq 0$,  
if $v^TAv\ge 0$ for all $v\in\re^n$. If $v^TAv>0$ for every nonzero vector $v\in\re^n$, 
then $A$ is positive definite, written as $A\succ 0$.
Let $w = (w_1,\ldots, w_\ell)$ be a vector of variables. We use $\re[w]$ to denote the ring of real 
polynomials in $w$. Then $\re[w]_d\subseteq\re[w]$ is the set of real polynomials with degrees no 
more than $d$. For a polynomial $f(x,y,\xi)$, its total degree is denoted by $\deg(f)$. 
We use $\deg_x(f)$ (resp., $\deg_y(f)$, $\deg_{\xi}(f)$) to denote its partial degree in $x$ (resp., $y$, $\xi$).
For a tuple of polynomials $h = (h_1,\ldots, h_m)$, the notation $\deg(h)$ represents the highest degree 
among all $h_i$'s. For a monomial power $\af  \coloneqq  (\af_1, \ldots, \af_\ell)\in \N^\ell$, denote
\[ w^\af \, \coloneqq  \, w_1^{\af_1} \cdots  w_\ell^{\af_\ell} , 
\quad\mbox{with}\quad |\af| \, \coloneqq \, \af_1 + \cdots + \af_\ell. \]
For a degree $d$, denote the set of monomial powers in $w$ as 
$\N_d^\ell\,\coloneqq\, \{\alpha\in\N^\ell: |\alpha|\le d\}$. The notation
\[ [w]_d \, \coloneqq  \, \bbm 1 & w_1 & \cdots & w_\ell & (w_1)^2 & w_1w_2
   & \cdots & (w_\ell)^d \ebm^T, \]
denotes the monomial vector with the highest degree $d$ and ordered alphabetically.

A polynomial $p\in\mathbb{R}[w]$ is said to be a sum-of-squares (SOS) if it can be expressed as 
$p = p_1^2 + \cdots + p_t^2$ for some  $p_1,\ldots, p_t\in\re[w]$. The set of all SOS polynomials 
in $w$ is denoted by $\Sigma[w]$. Its $d$th degree truncation is denoted by 
$\Sigma[w]_d\coloneqq \Sigma[w]\cap \re[w]_d$.
Let $h = (h_1,\ldots, h_m)$ be a tuple of polynomials and define $\Omega = \{w\in\re^\ell: h(w) \ge 0\}$.
We denote the nonnegative polynomial cone over $\Omega$ as
\[ \mathscr{P}(\Omega) \,\coloneqq\, \{p\in\re[w]: p(w)\ge 0,\forall w\in \Omega\}. \]
For every degree $d$, $\mathscr{P}_d(\Omega) \,\coloneqq\, \mathscr{P}(\Omega)\cap \re[w]_d$.
The preordering of $h$ is given as
\begin{equation}\label{eq:pre[h]}
\pre{h} \,\coloneqq \, \sum\limits_{J\subseteq [m]} \Big(\prod\limits_{i\in J}h_i\Big)\cdot \Sigma[w].
\end{equation}
Clearly, $\pre{h}\subseteq \mathscr{P}(\Omega)$. Interestingly, when $\Omega$ is compact, every
polynomial that is positive on $\Omega$ belongs to $\pre{h}$. This conclusion is referenced as 
{\it Schmudgen's Positivstellensatz} \cite{PutinarPositive}.
The quadratic module of $h$ is a subset of $\pre{h}$, which is defined as
\[ \qmod{h} \,  \coloneqq  \,  \Sigma[w] + h_1\cdot\Sigma[w] + \cdots + h_m \cdot\Sigma[w] . \]
Its $k$th order truncation is given as 
\begin{equation}\label{eq:qmod}
\qmod{h}_{2k} \, \coloneqq  \, \Sigma[w]_{2k} + 
h_1\cdot\Sigma[w]_{2k-\deg(h_1)}+ \cdots+ h_{m} \cdot\Sigma[w]_{2k-\deg(h_m)}.
\end{equation}
When $\Omega$ is compact, $\qmod{h}$ and each $\qmod{h}_{2k}$ are closed convex cones. 
For every $k$ such that $2k\ge \deg(h)$, the nested containment 
relation holds that
\[
\qmod{h}_{2k} \,\subseteq\, \qmod{h}_{2k+2} \,\subseteq\, 
\cdots\, \subseteq\, \qmod{h} \,\subseteq\, \pre{h}\,\subseteq\,\mathscr{P}(\Omega).
\]
In particular,  $\qmod{h}$ is said to be {\it archimedean} if there exists $q\in\qmod{h}$ such that 
$q(w)\ge 0$ determines a compact set. Suppose $\qmod{h}$ is archimedean. Every polynomial that is 
positive on $\Omega$ must be contained in $\qmod{h}$. This conclusion is called 
{\it Putinar's Postivstellensatz} \cite{PutinarPositive}. It is clear that $\Omega$ is compact when 
$\qmod{h}$ is archimedean. Conversely, if $\Omega$ is compact, $\qmod{h}$ may not be archimedean.
In this case, we can always find a sufficiently large $R>0$ such that $\Omega$ is contained in 
$\{w: R-\|w\|^2\ge 0\}$ and that $\qmod{\tilde{h}}$ is archimedean for $\tilde{h} = (h, R-\|w\|^2)$.

For an integer $k\ge 0$, a real vector $z = (z_{\alpha})_{\alpha\in\mathbb{N}_{2k}^{\ell}}$ is said to be a
{\it truncated multi-sequence} (tms) of $x$ with degree $2k$. For a polynomial 
$p(x) = \sum_{ \af \in \N^{\ell}_{2k} } p_\af x^\af$, denote the bilinear operation in $p$ and $z$ as:
\be \label{eq:<p,z>}
\langle p, z \rangle \,  \coloneqq  \,
{\sum}_{ \af \in \N^\ell_{2k} } p_\af z_\af.
\ee
For a polynomial $q\in\re[x]_{2t}$ with $t\le k$, the $k$th order {\it localizing matrix} of $q$ and $z$
is the symmetric matrix $L_{q}^{(k)}[z]$ that satisfies
\be \label{eq:Lq[z]}
\langle qa^2, z \rangle  \, =  \,
vec(a)^T \big( L_{q}^{(k)}[z]  \big) vec(a)
\ee
for each polynomial $a(x) = vec(a)^T[x]_s$ with $s\le k - t$. When $q=1$ being the constant one polynomial, 
$L_q^{(k)}[z]$ becomes the $k$th order \textit{moment matrix} $M_t[z]  \coloneqq L_{1}^{(k)}[z]$.
Quadratic modules and their dual cones play a critical role in polynomial optimization.
Recently, polynomial optimization has been actively studied in \cite{HNpo23,HuangNie23,MaiLas23,QuTang22}.
We refer to monographs \cite{Las09book,Lau14,NieBook} for comprehensive results in polynomial optimization.

\section{Lower Approximations of Recourse Functions via Polynomials}
\label{sec:plb}

This section is devoted to the phase one of our approach on the construction of a polynomial lower approximation of the 
(nonconvex) recourse function $f_2(x,\xi)$ over ${\cal F}$, under the assumption that the functions $F(x,y,\xi)$, $\{g_{2,i}(x,y,\xi)\}$ and $\{g_{0,i}(\xi)\}$ in problems \eqref{eq:2stageSP} and \eqref{eq:setS1} are polynomials.

\subsection{Linear conic optimization}
\label{ssc:lco}

In this subsection, we discuss how to solve problem \eqref{eq:maxim:int}.
This is a linear conic optimization problem whose decision variable is the coefficient vector of $p(x,\xi)$.
We start with a toy example.
\begin{example}\label{eq:lco}
Let $x,y,\xi\in\re$ and 
\[
F(x,y,\xi) = (x+y-\xi)^2,\quad X = S =\re,\quad  Y(x,\xi)=Y=\re. 
\]
Obviously $\mc{F}=\re^2$ and  $K=\re^3$. We take 
$\nu$ as the standard normal distribution on $\mathbb{R}^2$ and $p(x,\xi)$ as 
a quadratic polynomial in the form of
\[
p(x,\xi) \,=\,  p_{00} + p_{10}x + p_{01}\xi + p_{20}x^2 + p_{11}x\xi + p_{02}\xi^2.
\]
Since $ \int_{\mc{F}} x{\tt d}\nu = \int_{\mc{F}} \xi{\tt d}\nu = \int_{\mc{F}} x\xi{\tt d}\nu = 0$
and $\int_{\mc{F}}1{\tt d} \nu = \int_{\mc{F}} x^2{\tt d}\nu = \int_{\mc{F}} \xi^2{\tt d}\nu = 1$, we have
\[ \int_{\mc{F}} p(x,\xi){\tt d}\nu \,=\, p_{00} + p_{20}+p_{02}. \]
 In addition, since
$\mathscr{P}_2(\re^3) = \Sigma[x,y,\xi]_2$, we have that
\[
F(x,y,\xi) - p(x,\xi) \,=\, \bbm 1\\ x\\ y\\ \xi\ebm^T
\left[\begin{array}{rrrr}
-p_{00} & -0.5 p_{10} & 0 & -0.5p_{01}\\ 
-0.5p_{10}  & 1-p_{20} &  1 & -1-0.5p_{11}\\
0 & 1 & 1 & -1\\ 
-0.5p_{01} & -1-0.5p_{11} & -1 & 1-p_{02}
\end{array}\right]
\bbm 1\\ x\\y \\ \xi\ebm
\]
is nonnegative on $\re^3$ if and only if the above coefficient matrix is positive semidefinite. 
This is satisfied when all coefficients of $p(x,\xi)$ are zeros, i.e.,  $p=0$ is the identically zero polynomial.
\end{example}

In general, even if all the functions $F(x,y,\xi)$ and $g_{2,i}(x,y,\xi)$ for $i\in {\cal I}_2$  in 
\eqref{eq:second_stage} are polynomials, the value function $f_2(x,\xi)$ may not be continuous, 
as can be seen from the following example ($x,y,\xi$ are all univariate):
\[ 
\left(\begin{array}{ll}
f_2(x,\xi) = & \displaystyle \min_{y} \; y \\
& \mbox{s.t.}\quad xy=0,\;  -1\leq y\leq \xi^2.
\end{array}\right)   = \left\{\begin{array}{ll} -1 &\mbox{if $x = 0$,}\\[0.02in]
0 &\mbox{if $x\neq 0$.}
\end{array}\right. 
\] 
Additional assumptions are needed to make the function $f_2$ continuous, such as the 
 {\it restricted inf-compactness} 
 condition \cite[Definition 3.13]{GuoLinYeZhang} together with some constraint qualifications.
We refer to \cite[\S6.5.1]{Clarke} and \cite{GauvinDubeau,Rochafellar82} for more details on these results.
When $\mc{F}$ is compact and the value function $f_2(x,\xi)$ is continuous,
the objective in \reff{eq:maxim:int} is bounded from above and its optimal value equals the integral of 
$f_2(x,\xi)$ with respect to $\nu$. We formally state the results below. 
\begin{theorem}
\label{thm:gam_equal}
Assume $\mc{F}$ is compact and $f_2(x,\xi)$ is continuous on $\mc{F}$. For a given probability 
measure $\nu$ supported on $\mc{F}$, the objective in \reff{eq:maxim:int} is bounded from above 
on its feasible region and the optimal value equals $\int_{\mc{F}} f_2(x,\xi){\tt d}\nu$.
\end{theorem}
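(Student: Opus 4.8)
The plan is to establish two inequalities that together pin down the optimal value. Write $\gamma^\ast$ for the optimal value of \eqref{eq:maxim:int} and $I \coloneqq \int_{\mc{F}} f_2(x,\xi)\,{\tt d}\nu$. First I would verify that $I$ is an upper bound on the objective over the feasible set, and in fact that every feasible $p$ satisfies $p(x,\xi)\le f_2(x,\xi)$ pointwise on $\mc{F}$. Indeed, if $F(x,y,\xi)-p(x,\xi)\in\mathscr{P}(K)$, then for any $(x,\xi)\in\mc{F}$ and any $y\in Y(x,\xi)$ we have $(x,y,\xi)\in K$, hence $F(x,y,\xi)\ge p(x,\xi)$; taking the infimum over $y\in Y(x,\xi)$ gives $f_2(x,\xi)\ge p(x,\xi)$. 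Since $\mc{F}$ is compact and $f_2$ is continuous on it, $f_2\in\mc{L}^1(\nu)$, and integrating the pointwise inequality against the probability measure $\nu$ yields $\int_{\mc{F}} p\,{\tt d}\nu\le I$. This shows the objective is bounded above by $I$ on the feasible region, so $\gamma^\ast\le I$.

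For the reverse inequality I would construct, for every $\varepsilon>0$, a feasible polynomial $p$ with $\int_{\mc{F}} p\,{\tt d}\nu \ge I-\varepsilon$. The idea is to approximate $f_2$ uniformly from below by a polynomial: since $f_2$ is continuous on the compact set $\mc{F}\subseteq\re^{n_1+n_0}$, the Stone--Weierstrass theorem gives a polynomial $q(x,\xi)$ with $\sup_{\mc{F}}|f_2-q|<\varepsilon/2$. Then the shifted polynomial $p(x,\xi)\coloneqq q(x,\xi)-\varepsilon/2$ satisfies $p(x,\xi)\le f_2(x,\xi)$ for all $(x,\xi)\in\mc{F}$, which I would translate into the conic feasibility condition: for every $(x,y,\xi)\in K$ one has $(x,\xi)\in\mc{F}$ and $y\in Y(x,\xi)$, so $F(x,y,\xi)\ge f_2(x,\xi)\ge p(x,\xi)$, i.e. $F-p\in\mathscr{P}(K)$. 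Hence $p$ is feasible for \eqref{eq:maxim:int}, and integrating $p\ge f_2-\varepsilon$ against $\nu$ gives $\int_{\mc{F}} p\,{\tt d}\nu\ge I-\varepsilon$. Letting $\varepsilon\to 0$ shows $\gamma^\ast\ge I$, and combined with the first part, $\gamma^\ast=I$; boundedness above has already been shown.

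The one point requiring care — and what I expect to be the main obstacle — is the passage from the pointwise inequality $p\le f_2$ on $\mc{F}$ to membership $F-p\in\mathscr{P}(K)$, and conversely. This hinges on the precise relationship between $\mc{F}$, $K$, and $Y(x,\xi)$ recorded in \eqref{defn:F and K}: one must check that the projection of $K$ onto the $(x,\xi)$-coordinates is exactly $\mc{F}$ and that the fibers are precisely the sets $Y(x,\xi)$, so that "$F-p$ nonnegative on $K$" is genuinely equivalent to "$F(x,y,\xi)\ge p(x,\xi)$ for all $(x,\xi)\in\mc{F}$, $y\in Y(x,\xi)$", which in turn — taking infima over the fibers — is equivalent to "$p\le f_2$ on $\mc{F}$". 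A secondary technical point is that $f_2\in\mc{L}^1(\nu)$ (needed for the integrals to make sense), which follows immediately from continuity on the compact support of $\nu$. No Positivstellensatz or archimedean hypothesis is needed here, since we are working with the full nonnegativity cone $\mathscr{P}(K)$ rather than a quadratic module; those tools will only enter later when \eqref{eq:maxim:int} is relaxed for computation.
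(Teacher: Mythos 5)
Your proposal is correct and follows essentially the same route as the paper: both directions rest on the observation that feasibility of $p$ means $p\le f_2$ pointwise on $\mc{F}$ (giving the upper bound $\int_{\mc{F}} f_2\,{\tt d}\nu$), and on a Weierstrass approximation of $f_2$ shifted down by the approximation error to produce feasible polynomials whose integrals approach $\int_{\mc{F}} f_2\,{\tt d}\nu$. The only difference is cosmetic: you spell out the upper-bound direction (infimum over the fibers $Y(x,\xi)$) which the paper merely asserts, and you shift by $\varepsilon/2$ rather than $\varepsilon$.
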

\begin{proof}
Under the given assumptions, the integral $\int_{\mc{F}} f_2(x,\xi){\tt d}\nu$ is finite and it is an upper 
bound for the optimal value of \reff{eq:maxim:int}. Let $\varepsilon>0$ be an arbitrarily small scalar.
By Weierstrass approximation theorem \cite[Theorem~7.26]{RudinMathAnalysis}, there is a polynomial 
$q_{\varepsilon}\in\re[x,\xi]$ such that 
\[
|f_2(x,\xi) - q_{\varepsilon}(x,\xi)| \,\le\, \varepsilon,
\quad \forall (x,\xi)\in \mc{F}.
\]
Let $\tilde{q}_{\varepsilon}(x,\xi)\coloneqq q_{\varepsilon}(x,\xi)-\varepsilon$. It is feasible for 
\reff{eq:maxim:int} because for every triple $(x,y,\xi)\in K$, we have
\[
F(x,y,\xi) - \tilde{q}_{\varepsilon}(x,\xi) \,\ge\, (f_2(x,\xi) - q(x,\xi)) +\varepsilon \,\ge\, 0.
\]
In addition, for the given probability measure $\nu$, since $\int_{\mc{F}} 1{\tt d}\nu = 1$,
it holds that
\[
\int_{\mc{F}} | f_2(x,\xi) - \tilde{q}_{\varepsilon}(x,\xi) | {\tt d}{\nu}
\,\le\, \max\limits_{(x,\xi)\in\mc{F}}\, 
|f_2(x,\xi) - q_{\varepsilon}(x,\xi)| + \varepsilon\,\le\, 2\varepsilon.
\] 
Since $\varepsilon$ can be arbitrarily small, there exists a sequence of optimizing polynomials 
converging to $f_2(x,\xi)$ in $\mc{L}^1(\nu)$.  Hence, their integrals converge to 
$\int_{\mc{F}}f_2(x,\xi){\tt d}\nu$.
\end{proof}

When the recourse function $f_2(x,\xi)$ is itself a polynomial, problem \reff{eq:maxim:int} has a 
global optimal solution $f_2(x,\xi)$. If, however,  the function $f_2(x,\xi)$ is not a polynomial, 
we can construct a sequence of approximating polynomial functions $\{p^{(k)}\}_{k = 1}^{\infty}$, each serving as a  
 lower bound for $f_2(x,\xi)$ over ${\cal F}$. Furthermore, the integral
$\int_{\mc{F}} p^{(k)}(x,\xi)\, {\tt d}\nu$ converges to the optimal value of 
\reff{eq:maxim:int} as $k\to \infty$. In Section~\ref{sc:lco}, we will discuss how 
 to compute such a convergent polynomial sequence numerically.

Suppose $\{p^{(k)}(x,\xi)\}_{k=1}^{\infty}$ is an optimizing sequence of \reff{eq:maxim:int}, 
i.e., each of them is feasible to \reff{eq:maxim:int} and  
$\displaystyle\lim_{k\to \infty}\int_{\mc{F}}p^{(k)}(x,\xi){\tt d}\nu = \int_{\mc{F}}f_2(x,\xi){\tt d}\nu$.
Then the term $\mathbb{E}_{\mu}[f_2(x,\xi)]$, which is the expectation of the recourse function
 in the first stage problem \eqref{eq:2stageSP},   should be well approximated by 
$\mathbb{E}_{\mu}[p^{(k)}(x,\xi)]$  when $k$ is sufficiently large.
The accuracy of the estimation depends on the selection of the probability measure $\nu$. For instance, if $\nu$ 
is the uniform distribution over  $\mc{F}$, then \reff{eq:maxim:int} finds
a lower approximating function that uniformly approximates $f_2(x,\xi)$ across $\mc{F}$. If we define
$\nu\coloneqq \delta_{\hat{x}}\times \mu$,  where $\delta_{\hat{x}}$ is a Dirac measure centered at $\hat{x}\in X$,   and $S$ denotes the projection of 
$\mc{F}$ onto the $\xi$-plane, then the objective of \reff{eq:maxim:int} reduces to 
\[
\int_{\mc{F}} p(x,\xi){\tt d}(\delta_{\hat{x}}\times \mu)
\,=\, \int_{S} p(\hat{x},\xi){\tt d}\mu\,=\, 
\mathbb{E}_{\mu}[p(\hat{x}, \xi)].
\]
Solving \reff{eq:maxim:int} gives an accurate evaluation of $\mathbb{E}_{\mu}[p(x,\xi)]$ 
at the point $x=\hat{x}$. In practice, we can strategically modify the measure $\nu$ 
to enhance the approximation of the original function in specific areas. Further discussions of this approach are given in the next section.

The requirement for $f_2(x,\xi)$ being continuous over $\mc{F}$ can be relaxed to being integrable with respect to the Lebesgue-Stieltjes measure $\nu$. This relaxed condition allows for the inclusion of functions that may possess discontinuities yet remain integrable. 
The formal statement and proof of this relaxation are given in the following corollary.

\begin{corollary}\label{cor:contin_conv}
(a) If $f_2(x,\xi)$ is a polynomial, then it must be a global  optimal solution of \reff{eq:maxim:int}.

\noindent
(b) Suppose $\mc{F}$ is compact and $\nu$ is a Lebesgue-Stieltjes probability measure supported 
on $\mc{F}$. If $f_2(x,\xi)\in \mc{L}^1(\nu)$, then the problem \reff{eq:maxim:int} is bounded from above,  and its 
optimal value is equal to $\int_{\mc{F}}f_2(x,\xi){\tt d}\nu$.
\end{corollary}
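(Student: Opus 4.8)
The plan is to prove part (a) first, since it is essentially immediate, and then adapt the proof of Theorem~\ref{thm:gam_equal} to establish part (b), where the only real change is replacing the Weierstrass (uniform) approximation argument with an $\mc{L}^1(\nu)$ density argument.

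For part (a), suppose $f_2(x,\xi)$ is a polynomial. Then by the equivalence recorded around \eqref{eq:F-p>0:int}, taking $p = f_2$ yields $F(x,y,\xi) - f_2(x,\xi) \ge 0$ for all $(x,y,\xi)\in K$, since for each $(x,\xi)\in\mc{F}$ and each $y\in Y(x,\xi)$ we have $F(x,y,\xi)\ge \min_{y'\in Y(x,\xi)} F(x,y',\xi) = f_2(x,\xi)$. Hence $F - f_2 \in \mathscr{P}(K)$, so $f_2$ is feasible for \reff{eq:maxim:int}. On the other hand, for \emph{any} feasible $p$, inequality \eqref{eq:lower bound} gives $p(x,\xi)\le f_2(x,\xi)$ on $\mc{F}$, so $\int_{\mc{F}} p\,{\tt d}\nu \le \int_{\mc{F}} f_2\,{\tt d}\nu$. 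Therefore $f_2$ attains the supremum and is a global optimal solution.

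For part (b), the upper bound $\int_{\mc{F}} p\,{\tt d}\nu \le \int_{\mc{F}} f_2\,{\tt d}\nu$ for every feasible $p$ follows exactly as in part (a) from \eqref{eq:lower bound} together with $f_2\in\mc{L}^1(\nu)$ (which makes the right-hand side finite); this shows the problem is bounded from above. For the reverse direction, fix $\varepsilon>0$. Since $\mc{F}$ is compact, polynomials are dense in $C(\mc{F})$ by Weierstrass, and $C(\mc{F})$ is in turn dense in $\mc{L}^1(\nu)$ (standard measure theory: continuous functions are dense in $\mc{L}^1$ of a finite Borel/Lebesgue--Stieltjes measure on a compact metric space). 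Hence there is a polynomial $q_\varepsilon\in\re[x,\xi]$ with $\int_{\mc{F}} |f_2 - q_\varepsilon|\,{\tt d}\nu \le \varepsilon$. The obstacle is that, unlike in Theorem~\ref{thm:gam_equal}, we no longer have a pointwise bound $|f_2 - q_\varepsilon|\le\varepsilon$ on all of $\mc{F}$, so shifting $q_\varepsilon$ down by a constant $\varepsilon$ need not make it feasible (i.e.\ a pointwise lower bound on $f_2$). This is the main point requiring care. To handle it, I would first produce a polynomial $q_\varepsilon$ that \emph{pointwise underestimates} $f_2$ on $\mc{F}$ while still being $\mc{L}^1(\nu)$-close: the simplest route is to note that when $f_2$ has only (say) mild discontinuities one can sandwich $f_2$ from below by a continuous function $\phi_\varepsilon \le f_2$ with $\int_{\mc{F}}(f_2 - \phi_\varepsilon)\,{\tt d}\nu \le \varepsilon$ (e.g.\ via Lusin's theorem / lower-semicontinuous regularization, using that $\nu$ is a finite regular measure and $f_2$ is bounded below on the compact set $\mc{F}$ by some constant $-M$ because $F$ and the constraint data are polynomial and $\mc{F}$, $K$ can be taken compact), then uniformly approximate $\phi_\varepsilon$ by a polynomial $q$ with $\|\phi_\varepsilon - q\|_{\infty,\mc{F}}\le\varepsilon$ and set $\tilde q_\varepsilon \coloneqq q - \varepsilon \le \phi_\varepsilon \le f_2$ on $\mc{F}$.

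With such a $\tilde q_\varepsilon$ in hand, feasibility for \reff{eq:maxim:int} follows as in Theorem~\ref{thm:gam_equal}: for every $(x,y,\xi)\in K$ we have $(x,\xi)\in\mc{F}$, hence $F(x,y,\xi) \ge f_2(x,\xi) \ge \tilde q_\varepsilon(x,\xi)$, so $F - \tilde q_\varepsilon\in\mathscr{P}(K)$. Moreover
\[
\int_{\mc{F}} f_2\,{\tt d}\nu - \int_{\mc{F}} \tilde q_\varepsilon\,{\tt d}\nu
\;=\; \int_{\mc{F}} (f_2 - \phi_\varepsilon)\,{\tt d}\nu + \int_{\mc{F}}(\phi_\varepsilon - q)\,{\tt d}\nu + \varepsilon
\;\le\; \varepsilon + \varepsilon + \varepsilon \;=\; 3\varepsilon,
\]
using $\int_{\mc{F}} 1\,{\tt d}\nu = 1$. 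Letting $\varepsilon\downarrow 0$ produces a sequence of feasible polynomials whose $\nu$-integrals converge to $\int_{\mc{F}} f_2\,{\tt d}\nu$, which together with the upper bound shows the optimal value of \reff{eq:maxim:int} equals $\int_{\mc{F}} f_2\,{\tt d}\nu$. The one step I would double-check in writing up is the lower-semicontinuous/continuous underestimation of $f_2$: it is where the hypotheses ``$\mc{F}$ compact'', ``$\nu$ a finite Lebesgue--Stieltjes (hence regular) measure'', and ``$f_2\in\mc{L}^1(\nu)$'' (plus boundedness below of $f_2$ on $\mc{F}$) are genuinely used, and it is the only place the argument diverges from the continuous case.
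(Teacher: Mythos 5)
Your part (a) and the overall skeleton of part (b) coincide with the paper's argument: the paper's proof is exactly the two-line remark that continuous functions are dense in $\mc{L}^1(\nu)$ on the compact set $\mc{F}$, followed by ``the same argument as Theorem~\ref{thm:gam_equal}.'' You have correctly spotted the point this glosses over, namely that feasibility in \reff{eq:maxim:int} requires the \emph{pointwise} inequality $p(x,\xi)\le f_2(x,\xi)$ on all of $\mc{F}$, which an $\mc{L}^1$-approximation does not supply and which a constant downward shift cannot repair. That observation is sharper than the paper's own write-up.

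However, the patch you propose does not close the gap as stated. Lusin's theorem yields a continuous $g$ agreeing with $f_2$ off a set of small $\nu$-measure, but it gives no control of $g$ \emph{on} that set, and the exceptional set is open (possibly dense in $\mc{F}$), so you cannot force a continuous correction to push $g$ below $f_2$ there without ruining either continuity or the $\mc{L}^1$ estimate. Worse, for a general $f_2\in\mc{L}^1(\nu)$ the continuous minorant you need may simply not exist: take $\mc{F}=[0,1]$, $\nu$ Lebesgue, and $f_2=-\chi_Q$ with $Q$ countable dense; then every polynomial $p\le f_2$ satisfies $p\le -1$ on $Q$, hence on all of $[0,1]$ by continuity, so the supremum in \reff{eq:maxim:int} is $-1$ while $\int f_2\,{\tt d}\nu=0$. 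So the conclusion genuinely uses more than $f_2\in\mc{L}^1(\nu)$: one needs the structure of $f_2$ as a value function. The clean way to finish is to use that under the standing compactness assumptions (e.g.\ $K$ compact, so the feasible-set map $Y(\cdot)$ has closed graph and is locally bounded) the value function $f_2$ is lower semicontinuous and bounded below on $\mc{F}$; a bounded-below l.s.c.\ function is the pointwise \emph{increasing} limit of continuous functions $\phi_k\uparrow f_2$, each $\phi_k\le f_2$ is then handled exactly as in your last display (uniform polynomial approximation of $\phi_k$ minus $\varepsilon$), and monotone convergence gives $\int\phi_k\,{\tt d}\nu\to\int f_2\,{\tt d}\nu$. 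With that substitution for the Lusin step, your argument is complete and in fact supplies the justification the paper's proof leaves implicit.
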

\begin{proof}
Part (a) is obvious. For part (b), when $\mc{F}$ is compact and $\nu$ is a Lebesgue-Stieltjes measure, the set of continuous 
functions is dense in $\mc{L}^1(\nu)$. Therefore,  the result can be proved via similar arguments in the proof of 
Theorem~\ref{thm:gam_equal}.
\end{proof}
We would like to highlight that using  SOS techniques to lower approximate  nonsmooth functions has been  extensively studied in the existing literature for various applications. In particular, when $\nu$ is the Lebesgue measure, 
the asymptotical convergence of the polynomial lower approximating  functions 
 towards different target functions are well studied under proper compact and semicontinuity assumptions. 
For example, 
the readers can find from \cite[Theorem~1]{HenLas12} for the approximation of eigenvalue functions in robust control, 
\cite[Theorem~3.2]{HessHenLas16} for the spectral abscissa, and 
\cite[Theorem~1]{HenPau17} for the value function in the optimal control.

\subsection{A special case: $\xi$ has a finite support}
\label{ssc:sc}

When the random vector $\xi$ has a finite support, say $S = \{\xi^{(1)},\ldots, \xi^{(r)}\}$, we may approximate the recourse function $f_2(x,\xi)$  at each $\xi^{(i)}$ individually by a polynomial merely in terms of $x$  to enhance the quality of the overall approximations. 
Specifically, assume
\begin{equation}\label{eq:disS}
\mu \,=\, \lambda_1 \delta_{\xi^{(1)}} + \lambda_2 \delta_{\xi^{(2)}} +\cdots + 
\lambda_r\delta_{\xi^{(r)}},
\end{equation}
where each $\lambda_i>0$ and that $\lambda_1+\lambda_2+\cdots +\lambda_r = 1$.
In this setting,  the expected recourse can be expressed as 
\[
\mathbb{E}_{\mu}[f_2(x,\xi)] \,=\, \lambda_1 f_2(x,\xi^{(1)}) + \lambda_2f_2(x,\xi^{(2)})+
\cdots + \lambda_r f_2(x,\xi^{(r)}).
\]
In the above, every $f_2(x,\xi^{(i)})$ is a function only dependent on $x$. Note
$f_2(x,\xi^{(i)}) \le F(x,y,\xi^{(i)})$ for every $y\in Y(x,\xi^{(i)})$. Since $F(x,y,\xi^{(i)})$ is 
a polynomial, when $X$ is compact, the function $f_2(x,\xi^{(i)})$ is bounded from above over the set 
\begin{equation}\label{eq:F_i}
\mc{F}_i \coloneqq \{x\in X: Y(x,\xi^{(i)})\not=\emptyset\}.
\end{equation}
The feasible region in \reff{defn:F and K} becomes $\mc{F} = \bigcup_{i = 1}^r\mc{F}_i\times \{\xi^{(i)}\}$. If for every $i\in[r]$, we can find a polynomial $p_i\in\re[x]$ such that 
\begin{equation}\label{eq:f2-pi>0inFi}
f_2(x,\xi^{(i)}) - p_i(x)\,\ge\, 0,\quad \forall x\in \mc{F}_i, 
\end{equation}
then  a lower approximating function for the expected recourse can be constructed as 
\begin{equation}\label{eq:lbf:dis}
p(x)\,\coloneqq\, \lambda_1 p_1(x)+ \lambda_2p_2(x)+\cdots+\lambda_r p_r(x).
\end{equation}
Consequently,  
$ \mathbb{E}_{\mu}[f_2(x,\xi)]-p(x)\,\ge\, 0$ for any $x\in X$.
Such polynomials $p_i(x)$ can be solved via linear conic optimization problems
similarly as in the previous subsection. Let $\nu_i$ be a probability measure supported on $\mc{F}_i$
and denote the feasible region
\begin{equation}\label{eq:Ki}
K_i \,\coloneqq\, \big\{(x,y): x\in\mc{F}_i,\, y\in Y(x,\xi^{(i)})\big\}.
\end{equation}
Consider the optimization problem
\begin{equation}\label{eq:maxim:int:dd}
\left\{\begin{array}{cl}
\max\limits_{p_i\in\re[x]} & \int_{\mc{F}_i} p_i(x) {\tt d}\nu_i\\
\st & F(x, y, \xi^{(i)}) - p_i(x) \in \mathscr{P}(K_i)^{x,y},
\end{array}\right.
\end{equation}
where $\mathscr{P}(K_i)^{x,y}\coloneqq\{q\in\re[x,y]: q(x,y)\ge 0,\forall (x,y)\in K_i\}$ is the 
nonnegative polynomial cone. To emphasize $\mathscr{P}(K_i)^{x,y}\subseteq \re[x,y]$, 
we add the superscript $^{x,y}$ to distinguish it from $\mathscr{P}(K)\subseteq \re[x,y,\xi]$.
Clearly, every feasible polynomial of \reff{eq:maxim:int:dd} satisfies \reff{eq:f2-pi>0inFi}.
Problem \reff{eq:maxim:int:dd} aims to find the best polynomial lower approximating function 
of $f_2(x,\xi^{(i)})$ such that
\[
\int_{\mc{F}_i} |f_2(x,\xi^{(i)}) - p_i(x)|{\tt d}\nu_i\,=\, 
\int_{\mc{F}_i} f_2(x,\xi^{(i)}){\tt d}\nu_i - \int_{\mc{F}_i} p_i(x){\tt d}\nu_i
\]
is minimized. Compared to problem \reff{eq:maxim:int}, problem \reff{eq:maxim:int:dd} has a smaller number 
of variables, which is expected to be easier to solve in practice. It has computational advantages when the cardinality of
the support set $S$ is small but the dimension for the random vector $\xi$ is large. Indeed, 
to solve for a polynomial lower bound function of degree $d$, the number of variables in 
\reff{eq:maxim:int} is $\binom{n_0+n_1+d}{d}$ and the number of variables in \reff{eq:maxim:int:dd} 
is $\binom{n_1+d}{d}$. In applications, the finite support $S$ is usually not given directly 
but is approximated by a large number of samples. In this case, we can apply 
the method proposed in \cite{nie21lossfunc} to find a finite set $\tilde{S}$ that is close to $S$. 
A group of lower approximating functions $\tilde{p}_i(x)$ can be similarly computed by solving 
\reff{eq:maxim:int:dd} with respect to each scenario in $\tilde{S}$. When $\tilde{S}$ is sufficiently 
close to $S$, such $\tilde{p}_i(x)$ can also be used to form a good approximation of the recourse 
function.

Under some compact and continuity assumptions, we can obtain similar 
results to Theorem~\ref{thm:gam_equal}.
\begin{theorem}
\label{thm:gam_equal_2}
Assume $\mc{F}_i$ is compact and $f_2(x,\xi^{(i)})$ is continuous on $\mc{F}_i$.
For a given probability measure $\nu_i$ supported on $\mc{F}_i$, problem \reff{eq:maxim:int:dd} is 
bounded from above and its optimal value is $\int_{\mc{F}_i} f_2(x,\xi^{(i)}){\tt d}\nu_i$.
\end{theorem}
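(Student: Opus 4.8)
The plan is to mirror, almost verbatim, the argument used in the proof of Theorem~\ref{thm:gam_equal}, only now in the reduced setting where the polynomial depends solely on $x$ and the ambient feasible set is $\mc{F}_i$ rather than $\mc{F}$. First I would establish the easy direction: since $\mc{F}_i$ is compact and $f_2(x,\xi^{(i)})$ is continuous on $\mc{F}_i$, the integral $\int_{\mc{F}_i} f_2(x,\xi^{(i)}){\tt d}\nu_i$ is finite, and because any feasible $p_i$ of \reff{eq:maxim:int:dd} satisfies \reff{eq:f2-pi>0inFi}, i.e.\ $p_i(x)\le f_2(x,\xi^{(i)})$ for all $x\in\mc{F}_i$, integrating against the probability measure $\nu_i$ gives $\int_{\mc{F}_i}p_i{\tt d}\nu_i\le \int_{\mc{F}_i}f_2(x,\xi^{(i)}){\tt d}\nu_i$. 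Hence the objective is bounded from above by $\int_{\mc{F}_i} f_2(x,\xi^{(i)}){\tt d}\nu_i$.

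For the reverse inequality I would run the Weierstrass-approximation construction. Fix $\varepsilon>0$; by the Weierstrass approximation theorem there is a polynomial $q_\varepsilon\in\re[x]$ with $|f_2(x,\xi^{(i)})-q_\varepsilon(x)|\le\varepsilon$ for all $x\in\mc{F}_i$. Set $\tilde q_\varepsilon(x):=q_\varepsilon(x)-\varepsilon$. One checks feasibility for \reff{eq:maxim:int:dd}: for every $(x,y)\in K_i$ we have $x\in\mc{F}_i$ and $y\in Y(x,\xi^{(i)})$, so $F(x,y,\xi^{(i)})\ge f_2(x,\xi^{(i)})\ge q_\varepsilon(x)-\varepsilon=\tilde q_\varepsilon(x)$, which means $F(x,y,\xi^{(i)})-\tilde q_\varepsilon(x)$ is nonnegative on $K_i$, i.e.\ lies in $\mathscr{P}(K_i)^{x,y}$. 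Then, exactly as before, $\int_{\mc{F}_i}|f_2(x,\xi^{(i)})-\tilde q_\varepsilon(x)|{\tt d}\nu_i\le \max_{x\in\mc{F}_i}|f_2(x,\xi^{(i)})-q_\varepsilon(x)|+\varepsilon\le 2\varepsilon$, using $\int_{\mc{F}_i}1\,{\tt d}\nu_i=1$. Since $\varepsilon$ is arbitrary, this produces an optimizing sequence whose integrals converge to $\int_{\mc{F}_i}f_2(x,\xi^{(i)}){\tt d}\nu_i$, so the optimal value equals that integral.

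The only genuine subtlety — and the one point I would be careful to spell out rather than wave through — is the finiteness and upper-boundedness claim, which rests on the fact that $f_2(x,\xi^{(i)})<\infty$ for every $x\in\mc{F}_i$ (the definition of $\mc{F}_i$ in \reff{eq:F_i} guarantees $Y(x,\xi^{(i)})\ne\emptyset$, and compactness of $X$ together with polynomiality of $F$ was already invoked in the text to bound $f_2(x,\xi^{(i)})$ from above on $\mc{F}_i$); combined with continuity on the compact set $\mc{F}_i$ this makes $f_2(x,\xi^{(i)})$ bounded, hence $\nu_i$-integrable. Everything else is a transcription of the earlier proof with $(x,\xi)\mapsto x$, $\mc{F}\mapsto\mc{F}_i$, $K\mapsto K_i$, $\nu\mapsto\nu_i$, and $\mathscr{P}(K)\mapsto\mathscr{P}(K_i)^{x,y}$, so I would simply say so and refer back to Theorem~\ref{thm:gam_equal} to keep the proof short.
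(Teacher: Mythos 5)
Your proposal is correct and follows essentially the same argument as the paper: feasibility gives the upper bound $\int_{\mc{F}_i} f_2(x,\xi^{(i)}){\tt d}\nu_i$, and the Weierstrass approximation with the shift $\tilde q_\varepsilon = q_\varepsilon - \varepsilon$ produces feasible polynomials within $2\varepsilon$ in $\mc{L}^1(\nu_i)$, exactly as in the paper's transcription of Theorem~\ref{thm:gam_equal}. Your explicit feasibility check ($F \ge f_2 \ge \tilde q_\varepsilon$ on $K_i$) and the remark on finiteness of the integral are fine elaborations of steps the paper leaves implicit.
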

\begin{proof}
Under given assumptions, the integral $\int_{\mc{F}_i} f_2(x,\xi){\tt d} \nu_i$ is finite and we have
$\int_{\mc{F}_i} f_2(x,\xi){\tt d} \nu_i\ge \int_{\mc{F}_i} p(x){\tt d}{\nu_i}$ for every 
feasible polynomial of \reff{eq:maxim:int:dd}. By Weierstrass approximation theorem 
\cite[Theorem~7.26]{RudinMathAnalysis}, for every $\varepsilon>0$, there exists a real 
polynomial $q_{\varepsilon}(x)$ such that 
\[
|f_2(x,\xi^{(i)}) - q_{\varepsilon}(x)|\,\le\, \varepsilon,\quad \forall x\in\mc{F}_i.
\]
Let $\tilde{q}_{\varepsilon}(x)\coloneqq q_{\varepsilon}(x)-\varepsilon$. It is feasible for
\reff{eq:maxim:int:dd} and satisfies
\[
\int_{\mc{F}_i} |f_2(x,\xi^{(i)}) - \tilde{q}_{\varepsilon}(x)|{\tt d}\nu_i
\,\le\, \max\limits_{x\in\mc{F}_i}\, |f_2(x, \xi^{(i)}) - q_{\varepsilon}(x)|+\varepsilon
\,\le\, 2\varepsilon.
\] 
Since $\varepsilon$ can be arbitrarily small, there exists a sequence of feasible optimizing
polynomials that converges to $f_2(x,\xi^{(i)})$ in $\mc{L}^1(\nu_i)$, with their integrals converging
to $\int_{\mc{F}_i} f_2(x,\xi^{(i)}){\tt d}\nu_i$.
\end{proof}
As in Corollary~\ref{cor:contin_conv}, the continuous assumption of $f_2(x,\xi^{(i)})$ can be 
relaxed when $\nu_i$ is a Lebesgue-Stieltjes measure.
\begin{corollary}
(a) If $f_2(x,\xi^{(i)})$ is a polynomial, then it must be an optimizer of \reff{eq:maxim:int:dd}.

\noindent
(b) Suppose $\mc{F}_i$ is compact and $\nu_i$ is a Lebesgue-Stieltjes measure supported on
$\mc{F}_i$. If $f_2(x,\xi)\in \mc{L}^1(\nu_i)$, then problem \reff{eq:maxim:int:dd} is bounded from above and
its optimal value is $\int_{\mc{F}_i}f_2(x,\xi^{(i)})$. 
\end{corollary}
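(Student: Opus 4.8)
For part (a), the plan is immediate and mirrors Corollary~\ref{cor:contin_conv}(a). If $f_2(\cdot,\xi^{(i)})$ is a polynomial, take $p_i = f_2(\cdot,\xi^{(i)})$; the constraint of \reff{eq:maxim:int:dd} then reads $F(x,y,\xi^{(i)}) - f_2(x,\xi^{(i)}) \ge 0$ for all $(x,y)\in K_i$, which holds because $(x,y)\in K_i$ forces $y\in Y(x,\xi^{(i)})$, hence $F(x,y,\xi^{(i)}) \ge \inf_{y'\in Y(x,\xi^{(i)})} F(x,y',\xi^{(i)}) = f_2(x,\xi^{(i)})$. So $f_2(\cdot,\xi^{(i)})$ is feasible. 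Conversely, for \emph{any} feasible $p_i\in\re[x]$ and any $x\in\mc{F}_i$ one has $p_i(x)\le F(x,y,\xi^{(i)})$ for every $y\in Y(x,\xi^{(i)})$, so $p_i(x)\le f_2(x,\xi^{(i)})$ on $\mc{F}_i$; integrating against $\nu_i$ shows its objective value cannot exceed that of $f_2(\cdot,\xi^{(i)})$. Hence $f_2(\cdot,\xi^{(i)})$ is optimal (whenever the relevant integrals are well defined).

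For part (b), I would follow the template of the proof of Theorem~\ref{thm:gam_equal_2}, replacing its single appeal to the Weierstrass theorem by a two-step ``approximate then polynomialize'' argument. First, as in part (a), feasibility of \reff{eq:maxim:int:dd} forces $p_i\le f_2(\cdot,\xi^{(i)})$ on $\mc{F}_i$ for every feasible $p_i$; since $f_2(\cdot,\xi^{(i)})\in\mc{L}^1(\nu_i)$ this already gives $\int_{\mc{F}_i} p_i\,{\tt d}\nu_i \le \int_{\mc{F}_i} f_2(x,\xi^{(i)})\,{\tt d}\nu_i < \infty$, so \reff{eq:maxim:int:dd} is bounded above with value at most $\int_{\mc{F}_i} f_2(x,\xi^{(i)})\,{\tt d}\nu_i$. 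It remains to exhibit feasible polynomials whose integrals approach this bound. Fix $\varepsilon>0$. Since $\mc{F}_i$ is compact and $\nu_i$ is a finite regular (Lebesgue--Stieltjes) measure, continuous functions are dense in $\mc{L}^1(\nu_i)$; the crucial point is to extract from this a \emph{continuous} $g_\varepsilon$ that lies \emph{below} $f_2(\cdot,\xi^{(i)})$ everywhere on $\mc{F}_i$ and satisfies $\int_{\mc{F}_i}\bigl(f_2(x,\xi^{(i)}) - g_\varepsilon(x)\bigr)\,{\tt d}\nu_i \le \varepsilon$. Granting such a $g_\varepsilon$, I would apply the Weierstrass theorem \cite{RudinMathAnalysis} on the compact set $\mc{F}_i$ to get a polynomial $q_\varepsilon$ with $\sup_{\mc{F}_i}|g_\varepsilon - q_\varepsilon|\le\varepsilon$ and set $p_\varepsilon := q_\varepsilon - \varepsilon$; then $p_\varepsilon \le g_\varepsilon \le f_2(\cdot,\xi^{(i)})$ on $\mc{F}_i$, so $F(x,y,\xi^{(i)}) - p_\varepsilon(x)\ge 0$ on $K_i$ and $p_\varepsilon$ is feasible, while $\int_{\mc{F}_i}\bigl(f_2(x,\xi^{(i)}) - p_\varepsilon(x)\bigr)\,{\tt d}\nu_i \le \varepsilon + 2\varepsilon\,\nu_i(\mc{F}_i) = 3\varepsilon$ since $\nu_i$ is a probability measure. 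Letting $\varepsilon\downarrow 0$ yields the claimed optimal value.

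The main obstacle is precisely the highlighted step: $\mc{L}^1$-density of continuous functions produces a continuous $g$ that is \emph{close} to $f_2(\cdot,\xi^{(i)})$, but not one lying \emph{below} it, whereas feasibility in \reff{eq:maxim:int:dd} is the hard pointwise requirement $p_i\le f_2(\cdot,\xi^{(i)})$ on all of $\mc{F}_i$. In full generality a below-approximation in $\mc{L}^1$ need not exist (for $f_2 = -\mathbf{1}_{\mathbb{Q}}$ on $[0,1]$ with Lebesgue measure, every continuous $g\le f_2$ satisfies $g\le -1$), so the statement cannot be purely measure-theoretic and must exploit the structure of $f_2$. I would resolve this by using that $f_2(\cdot,\xi^{(i)})$, being the optimal value of a parametric polynomial program, is \emph{semialgebraic} in $x$ (Tarski--Seidenberg) on the region where it is finite, hence continuous on $\mc{F}_i$ outside a semialgebraic set of dimension $<n_1$; for a Lebesgue--Stieltjes $\nu_i$ that charges no lower-dimensional set (e.g. one absolutely continuous w.r.t. Lebesgue measure) this exceptional set is $\nu_i$-null, and on the open co-null region the desired $g_\varepsilon$ can be built by truncating $f_2$ from below at a large level $-N$ (which discards only a $\nu_i$-small, $\mc{L}^1$-small part because $f_2\in\mc{L}^1(\nu_i)$), followed by local mollification and patching. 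Equivalently, whenever $f_2(\cdot,\xi^{(i)})$ is lower semicontinuous on the compact set $\mc{F}_i$ --- the typical situation, and the one occurring in the example preceding Theorem~\ref{thm:gam_equal} --- the Lipschitz minorants $h_n(x)=\inf_{z\in\mc{F}_i}\{f_2(z,\xi^{(i)})+n\|x-z\|\}$ increase pointwise to $f_2(\cdot,\xi^{(i)})$, so $\int_{\mc{F}_i}h_n\,{\tt d}\nu_i\to\int_{\mc{F}_i}f_2(x,\xi^{(i)})\,{\tt d}\nu_i$ by monotone convergence and a large $h_n$ serves as $g_\varepsilon$. Pinning down how much of this regularity to assume is the only delicate part; everything else is a routine transcription of the proofs of Theorem~\ref{thm:gam_equal_2} and Corollary~\ref{cor:contin_conv}.
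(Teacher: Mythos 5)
Your part (a) is exactly the paper's argument (the paper dismisses it as obvious): $f_2(\cdot,\xi^{(i)})$ is feasible by definition of the value function, and every feasible $p_i$ satisfies $p_i\le f_2(\cdot,\xi^{(i)})$ on $\mc{F}_i$, so integrating gives optimality. For part (b), however, you take a genuinely more careful route than the paper. The paper gives no separate proof of this corollary; it points back to Corollary~\ref{cor:contin_conv}, whose entire proof is the remark that continuous functions are dense in $\mc{L}^1(\nu)$ on a compact set, followed by ``similar arguments'' to the Weierstrass-based proof of Theorem~\ref{thm:gam_equal_2}. You correctly identify why that transcription is not literal: feasibility in \reff{eq:maxim:int:dd} is the pointwise one-sided condition $p_i\le f_2(\cdot,\xi^{(i)})$ on all of $\mc{F}_i$, and an $\mc{L}^1$-approximant gives no sup-norm control, so the $\varepsilon$-shift trick that works under continuity does not go through; your example $f_2=-\mathbf{1}_{\mathbb{Q}}$ shows the conclusion cannot follow from integrability and $\mc{L}^1$-density alone. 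Your repair --- produce a continuous (Lipschitz inf-convolution) minorant $h_n\uparrow f_2(\cdot,\xi^{(i)})$ using lower semicontinuity (or semialgebraicity of the value function together with $\nu_i$ charging no lower-dimensional sets), apply monotone convergence, and only then invoke Weierstrass and shift down by $\varepsilon$ --- is sound and fills the step the paper glosses over. What the paper's shortcut buys is brevity; what your argument buys is an explicit identification of the extra regularity actually being used: lower semicontinuity of $f_2(\cdot,\xi^{(i)})$, which does hold in the paper's intended setting (e.g.\ when $K_i$ is compact, so minimizing sequences have convergent subsequences and the value function is l.s.c.), but is not literally among the corollary's stated hypotheses, and with only ``$\nu_i$ Lebesgue--Stieltjes and $f_2\in\mc{L}^1(\nu_i)$'' (which permits atoms at discontinuity points) the statement needs exactly the kind of qualification you flag. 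In short: your proposal is correct modulo the regularity caveat you yourself state, and it is sharper than the paper's own justification rather than a mere restatement of it.
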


\subsection{Conditions on tight lower bounds}
\label{ssc:tightbd}
A polynomial lower approximating function $p(x,\xi)$ is said to be a {\it tight} approximation of 
$f_2(x,\xi)$ on ${\cal F}$ with respect to the metric $\nu$ if 
$\int_{\mc{F}}|f_2(x,\xi)-p(x,\xi)|{\tt d\nu} = 0$. This particularly happens when $f_2(x,\xi)$ is itself  a polynomial.
It is thus an interesting question to understand the  conditions under which the recourse function is a polynomial.
For the two-stage SP \reff{eq:2stageSP}, denote the tuple of constraining polynomials as
\[
\tilde{g}(x,y,\xi)\,\coloneqq\, \big ((g_{0,i}(\xi))_{i\in\mc{I}_0},\quad  (g_{1,i}(x))_{i\in\mc{I}_1},
\quad (g_{2,i}(x,y,\xi))_{i\in\mc{I}_2}\big).
\]
It is clear that $K = \{(x,y,\xi): \tilde{g}(x,y,\xi)\ge 0\}$.
For convenience, we assume $[m]\coloneqq \mc{I}_0\cup\mc{I}_2\cup \mc{I}_2$ and use 
$\tilde{g}_i$ to denote the $i$th component of $\tilde{g}$. Then the preordering of $\tilde{g}$
can be written as
\[
\pre{\tilde{g}}\,\coloneqq\, \sum\limits_{J\subseteq [m]}
\Big(\prod\limits_{i\in J}\tilde{g}_i(x,y,\xi) \Big)\cdot \Sigma[x,y,\xi].
\]
Clearly, every polynomial in $\pre{\tilde{g}}$ is nonnegative on $K$.

First, we consider the relatively easy  case where \reff{eq:second_stage} is
an unconstrained optimization problem, i.e., $\mc{I}_2=\emptyset$ and  
 $F(x,y,\xi)$ is a quadratic function in $y$.
\begin{example}
Given $(x,\xi)$, suppose the second-stage problem takes the form of 
\[
f_2(x,\xi) = \left[\,\min\limits_{y\in\re^{n_2}}\quad  F(x,y,\xi) \,=\, \frac{1}{2}y^TAy +b(x,\xi)^Ty\,\right],
\]
where $A$ is a symmetric positive definite matrix. Since the objective function is strongly convex in $y$, 
we can solve for its unique optimizer $y^* = -A^{-1}b(x,\xi)$ from the 
first-order optimality condition $\nabla_y F(x,y^*,\xi) = Ay^* + b(x,\xi) = 0$.
This leads to the polynomial recourse function 
\[
f_2(x,\xi) = -\frac{1}{2}\,b(x,\xi)^TA^{-1}b(x,\xi).
\]
One can easily verify that  $F-f_2$ is a SOS polynomial, i.e.,
\[
F(x,y,\xi) - f_2(x,\xi) \,=\,  
\frac{1}{2}\, \big(y-A^{-1}b(x,\xi)\big)^TA \big(y-A^{-1}b(x,\xi)\big).
\]
In particular, for given $(x,\xi)$, the SOS polynomial on the right hand side can always achieve its 
global minimum at some $y\in\re^{n_2}$.
\end{example}
The above example motivates us to derive sufficient conditions of polynomial recourse functions
with SOS polynomial cones and preorderings, as stated in the following theorem.
\begin{theorem}\label{thm:poly_recourse}
Suppose that there exists a polynomial $q\in\pre{\tilde{g}}$ such that $F - q\in\re[x,\xi]$ and the set
\begin{equation}\label{eq:Vq}
\mc{V}_q(x,\xi)\,\coloneqq\, \{y\in\re^{n_2}: q(x,y,\xi) = 0\}
\end{equation}
is nonempty for every $(x,\xi)\in \mc{F}$. Then the recourse function of 
\reff{eq:2stageSP} satisfies $f_2(x,\xi)= F(x,y,\xi)-q(x,y,\xi)$ for any $(x,\xi)\in \mc{F}$.
\end{theorem}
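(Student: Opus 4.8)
The plan is to use the defining inequality for the recourse function together with the fact that any element of $\pre{\tilde g}$ is nonnegative on $K$. First I would fix an arbitrary $(x,\xi)\in\mc{F}$ and restrict attention to the polynomial $r(x,y,\xi)\coloneqq F(x,y,\xi)-q(x,y,\xi)$; by hypothesis $r$ does not depend on $y$, so I may write $r(x,y,\xi)=\varphi(x,\xi)$ for some polynomial $\varphi\in\re[x,\xi]$, and the claim to prove is exactly $f_2(x,\xi)=\varphi(x,\xi)$. The argument then splits into the two inequalities $f_2(x,\xi)\ge\varphi(x,\xi)$ and $f_2(x,\xi)\le\varphi(x,\xi)$.

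For the lower bound, observe that for every $y\in Y(x,\xi)$ the point $(x,y,\xi)$ lies in $K$, hence $q(x,y,\xi)\ge 0$ since $q\in\pre{\tilde g}$, and therefore
\[
F(x,y,\xi)\;=\;\varphi(x,\xi)+q(x,y,\xi)\;\ge\;\varphi(x,\xi).
\]
Taking the infimum over $y\in Y(x,\xi)$ gives $f_2(x,\xi)\ge\varphi(x,\xi)$. For the reverse inequality, I would invoke the hypothesis that $\mc{V}_q(x,\xi)$ is nonempty: pick $\bar y\in\mc{V}_q(x,\xi)$, so $q(x,\bar y,\xi)=0$. The key point here is that $\bar y$ is in fact feasible for the second-stage problem, i.e. $\bar y\in Y(x,\xi)$. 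This follows because $q\in\pre{\tilde g}$ means $q$ is a sum of terms of the form $\big(\prod_{i\in J}\tilde g_i\big)\sigma_J$ with each $\sigma_J\in\Sigma[x,y,\xi]$; evaluating at $(x,\bar y,\xi)$ each such term is nonnegative, and their sum is zero, so each term vanishes — in particular, if some $\tilde g_i(x,\bar y,\xi)$ were negative while $\sigma_J(x,\bar y,\xi)>0$ the corresponding singleton term $\tilde g_i\cdot\sigma_{\{i\}}$ would have to be cancelled, which a careful bookkeeping of signs (all terms $\ge 0$, total $=0$ forces each $=0$) rules out; hence $\tilde g_i(x,\bar y,\xi)\ge 0$ for all $i$, and since $(x,\xi)\in\mc{F}\subseteq X\times S$ already guarantees the $g_{0,i}$ and $g_{1,i}$ constraints, the remaining constraints $g_{2,i}(x,\bar y,\xi)\ge 0$ give $\bar y\in Y(x,\xi)$. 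Consequently
\[
f_2(x,\xi)\;\le\;F(x,\bar y,\xi)\;=\;\varphi(x,\xi)+q(x,\bar y,\xi)\;=\;\varphi(x,\xi),
\]
which completes the equality. Combining the two bounds, $f_2(x,\xi)=\varphi(x,\xi)=F(x,y,\xi)-q(x,y,\xi)$, and since $(x,\xi)\in\mc{F}$ was arbitrary the theorem follows.

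The main obstacle I anticipate is precisely the feasibility claim $\bar y\in Y(x,\xi)$ for a zero $\bar y$ of $q$: it needs the structural fact that a preordering element which vanishes at a point forces every constituent product-times-SOS term to vanish there, and then one must extract from $\prod_{i\in J}\tilde g_i(x,\bar y,\xi)\cdot\sigma_J(x,\bar y,\xi)=0$ that the individual constraint values $\tilde g_i(x,\bar y,\xi)$ are nonnegative. This is where some care is required, since a product of constraint values can be nonnegative without each factor being nonnegative; the cleanest route is to note that, a priori, we do not even need all $\tilde g_i\ge 0$ — we only need the $g_{2,i}$ ones, and for those one can argue by contradiction combined with the nonnegativity of the singleton terms $g_{2,i}\cdot\sigma_{\{i\}}$ in the preordering expansion, or alternatively strengthen the hypothesis slightly to $q\in\qmod{\tilde g}$ (a quadratic module rather than a preordering), where the argument is immediate because each term $\tilde g_i\cdot\sigma_i$ is individually nonnegative on a neighborhood structure that isolates the sign of $\tilde g_i$. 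I expect the authors' proof to either adopt the quadratic-module phrasing or to supply the sign bookkeeping explicitly.
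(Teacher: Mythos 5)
Your first half (the inequality $f_2(x,\xi)\ge\varphi(x,\xi)$, obtained from $q\ge 0$ on $K$ and taking the infimum over $y\in Y(x,\xi)$) is exactly the paper's argument. The gap is in your reverse inequality: the claim that an arbitrary $\bar y\in\mc{V}_q(x,\xi)$ must lie in $Y(x,\xi)$, and the sign bookkeeping you sketch for it, do not work. Nonnegativity of the individual terms $\bigl(\prod_{i\in J}\tilde{g}_i\bigr)\sigma_J$ in a preordering representation of $q$ is only guaranteed on $K$; at a point $(x,\bar y,\xi)$ with $\bar y$ possibly infeasible, some of the products $\prod_{i\in J}\tilde{g}_i$ may be negative, so the step ``all terms are $\ge 0$ and sum to zero, hence each vanishes'' assumes precisely what you are trying to prove. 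Worse, the feasibility claim is false in general and cannot be repaired from the stated hypotheses: take $n_2=1$, $X=S=[0,1]$, $g_2=(y)$ so that $Y(x,\xi)=\{y\ge 0\}$, and $q=F=(y+x+1)^2$. Then $q$ is SOS, hence $q\in\qmod{\tilde{g}}\subseteq\pre{\tilde{g}}$, $F-q=0\in\re[x,\xi]$, and $\mc{V}_q(x,\xi)=\{-(x+1)\}$ is nonempty for every $(x,\xi)\in\mc{F}$, yet the unique zero of $q(x,\cdot,\xi)$ is infeasible and $f_2(x,\xi)=(x+1)^2\neq 0=F-q$. This same example defeats your proposed fallback of strengthening $\pre{\tilde{g}}$ to $\qmod{\tilde{g}}$, since a pure SOS term already causes the failure.

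For comparison, the paper's proof does not attempt your feasibility argument at all: after writing $f_2(\hat x,\hat\xi)-p(\hat x,\hat\xi)=\min_{y\in Y(\hat x,\hat\xi)}q(\hat x,y,\hat\xi)\ge 0$, it simply asserts that the value $0$ ``can always be achieved'' because $\mc{V}_q$ in \reff{eq:Vq} is nonempty; that is, it uses the nonemptiness hypothesis as though it produced a zero of $q(\hat x,\cdot,\hat\xi)$ \emph{inside} $Y(\hat x,\hat\xi)$. That reading is what makes the conclusion hold, and it is automatic in the settings where the result is invoked: in Corollary~\ref{thm:uc_poly_recourse} the second stage is unconstrained, so $Y(x,\xi)=\re^{n_2}$ and every zero is feasible, and in the example following the theorem the exhibited zero $y=(x,\xi)$ is feasible. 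So your instinct that feasibility of $\bar y$ is the crux is correct, but it cannot be derived from $q\in\pre{\tilde{g}}$ and $\mc{V}_q(x,\xi)\neq\emptyset$ alone; what is really needed (and what the paper's proof implicitly uses) is $\mc{V}_q(x,\xi)\cap Y(x,\xi)\neq\emptyset$ for every $(x,\xi)\in\mc{F}$, and with that reading your argument reduces to the paper's.
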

\begin{proof}
Let $p\coloneqq F - q \in\re[x,y]$.
For given $(\hat{x},\hat{\xi})\in\mc{F}$, we have
\[ f_2(\hat{x},\hat{\xi}) - p(\hat{x},\hat{\xi}) 
\,=\, \min\limits_{y\in Y(\hat{x},\hat{\xi})} F(\hat{x},y,\hat{\xi}) - p(\hat{x},\hat{\xi}) 
\,=\, \min\limits_{y\in Y(\hat{x},\hat{\xi})} q(\hat{x},y,\hat{\xi}). \]
Notice that $K$ is a lifted set of $\mc{F}$ and $Y(x,\xi)$.
Since $K$ is determined by $\tilde{g}\ge 0$ and $q\in\pre{\tilde{g}}$, it holds that
\[
\min\limits_{y\in Y(\hat{x},\hat{\xi})} q(\hat{x},y,\hat{\xi}) 
\,\ge\, \min\limits_{(x,y,\xi)\in K} q(x,y,\xi)\,\ge \,0.
\]
In fact, $q(\hat{x}, y,\hat{\xi})=0$ can always be achieved since $\mc{V}_q(x,\xi)$ 
is nonempty for every $(x,\xi)\in\mc{F}$. The above arguments work for arbitrary 
$(\hat{x}, \hat{\xi})\in \mc{F}$, so $f_2-p$ vanishes on $\mc{F}$.
\end{proof}
Note that $\mc{F} = X\times S$ when the second-stage problem of 
\reff{eq:2stageSP} is unconstrained. We then have the following result as a special case of Theorem~\ref{thm:poly_recourse}.
\begin{corollary}\label{thm:uc_poly_recourse}
Suppose that the second-stage problem of \reff{eq:2stageSP} is unconstrained. If there exists 
$q\in\pre{\tilde{g}}$ such that $F - q\in\re[x,\xi]$, and the set $\mc{V}_q(x,\xi)$
is nonempty for every $x\in X$ and $\xi\in S$, then the recourse function of \reff{eq:2stageSP}
satisfies $f_2(x,\xi) = F(x,y,\xi)-q(x,y,\xi)$ for any $(x,\xi)\in X\times S$.
\end{corollary}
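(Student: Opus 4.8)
The plan is to obtain the corollary directly from Theorem~\ref{thm:poly_recourse} by identifying the feasible set in the unconstrained case. First I would note that when the second-stage problem of \reff{eq:2stageSP} is unconstrained we have $\mc{I}_2 = \emptyset$, hence $Y(x,\xi) = \re^{n_2}$ for every pair $(x,\xi)$, so $Y(x,\xi) \neq \emptyset$ always holds. By the definition \reff{defn:F and K}, this gives $\mc{F} = \{(x,\xi)\in X\times S : Y(x,\xi)\neq\emptyset\} = X\times S$, and correspondingly $K = X\times\re^{n_2}\times S$, which is still cut out by $\tilde{g}(x,y,\xi)\ge 0$ with $\tilde{g}$ now consisting only of the blocks $(g_{0,i}(\xi))_{i\in\mc{I}_0}$ and $(g_{1,i}(x))_{i\in\mc{I}_1}$.

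Next I would check that the hypotheses of Theorem~\ref{thm:poly_recourse} are exactly those assumed here. The existence of $q\in\pre{\tilde{g}}$ with $F-q\in\re[x,\xi]$ is assumed verbatim. The requirement that the set $\mc{V}_q(x,\xi)$ of \reff{eq:Vq} be nonempty for every $(x,\xi)\in\mc{F}$ coincides, via the identity $\mc{F}=X\times S$ just established, with the stated condition that $\mc{V}_q(x,\xi)\neq\emptyset$ for all $x\in X$ and $\xi\in S$. With the hypotheses matched, Theorem~\ref{thm:poly_recourse} immediately yields $f_2(x,\xi) = F(x,y,\xi) - q(x,y,\xi)$ for every $(x,\xi)\in\mc{F} = X\times S$, which is the claim. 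The only residual point requiring care is purely bookkeeping: one should confirm that removing the $\mc{I}_2$ block from $\tilde{g}$ does not disturb the argument in the proof of Theorem~\ref{thm:poly_recourse}, which invokes only $q\in\pre{\tilde{g}}\subseteq\mathscr{P}(K)$ (so $q\ge 0$ on $K$) together with the nonemptiness of $\mc{V}_q$ — both of which persist unchanged.

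There is no genuine technical obstacle; the corollary is a pure specialization. If a self-contained argument were preferred, I would simply repeat the short reasoning of Theorem~\ref{thm:poly_recourse}: for fixed $(\hat{x},\hat{\xi})\in X\times S$, writing $p\coloneqq F-q$, the value $f_2(\hat{x},\hat{\xi}) - p(\hat{x},\hat{\xi}) = \min_{y\in\re^{n_2}} q(\hat{x},y,\hat{\xi})$ is nonnegative because $q\ge 0$ on $K$, while the minimum equals zero at any point of $\mc{V}_q(\hat{x},\hat{\xi})$, which is nonempty by hypothesis; since $(\hat{x},\hat{\xi})$ was arbitrary in $X\times S$, the difference $f_2 - p$ vanishes there.
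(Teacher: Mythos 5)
Your proposal is correct and matches the paper's own treatment: the paper derives this corollary precisely by observing that $\mc{F}=X\times S$ when the second stage is unconstrained and then invoking Theorem~\ref{thm:poly_recourse} as a special case. Your added self-contained repetition of that theorem's short argument is accurate but not needed beyond the specialization.
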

We give an example of constrained second-stage optimization that has a polynomial 
recourse function.
\begin{example}
Given  $x\in\re^1$  and  $\xi\in\re^1$, 
consider the second-stage optimization problem 
\[
\left\{\begin{array}{rl}
f_2(x,\xi) =  \min\limits_{y\in\re^2} & F(x,y,\xi) = x^2y_1-xy_2^2\\
\st & y_1-x\ge 0,\, y_2\ge 0,\; x+\xi-y_1-y_2\ge 0.
\end{array}\right.
\] Assume $S = X = [0,1]$ are determined by
$(\xi,\, 1-\xi)\ge 0$ and $(x,1-x)\ge 0$, respectively. Then $\mc{F} = X\times S$.
Denote the tuple of constraining polynomials
\[ \tilde{g}(x,y,\xi) \,=\, (\xi,\,1-\xi,\, x,\, 1-x,\, y_1-x,\, y_2,\, x+\xi-y_1-y_2). \]
Let $q\in \pre{\tilde{g}}$ be given as
\[
q(x,y,\xi) \,=\, x^2(y_1-x) + xy_2(y_1-x) + x\xi(y_1-x) + xy_2(x+\xi-y_1-y_2) + x\xi(x+\xi-y_1-y_2).
\]
For every $(x,\xi)\in \mc{F}$, the set $\mc{V}_{q}(x,\xi)$ in \eqref{eq:Vq} is not empty since 
it always contains $y = (y_1,y_2) = (x,\xi)$. In addition, it is easy to compute that 
\[
F(x,y,\xi)-q(x,y,\xi) \,=\, x^3-x\xi^2\,\in\,\re[x,\xi].
\]
Then by Theorem~\ref{thm:poly_recourse}, the recourse function of this problem
is $f_2(x,\xi) = x^3-x\xi^2$.
\end{example}

\section{Algorithms for Solving Two-Stage SPs}
\label{sc:paa}

In this section, we introduce a polynomial approximation framework to solve 
the two-stage SP \reff{eq:2stageSP}, which is restated here for convenience:
\[
\min\limits_{x\in X}\quad f(x)\, \coloneqq\,  
f_1(x)+\mathbb{E}_{\mu} [f_2(x,\xi)].
\]
Our algorithm has two phases. First, we compute a polynomial lower approximating function $p(x,\xi)$ 
for the recourse function $f_2(x,\xi)$, leveraging the optimization problem \reff{eq:maxim:int} or \reff{eq:maxim:int:dd}. 
Subsequently, we approximate the first-stage problem \reff{eq:2stageSP} via  
\begin{equation}\label{eq:alg:lb}
\min_{x\in X}\quad \tilde{f}(x) \, \coloneqq\, 
f_1(x)+\mathbb{E}_{\mu}[p(x,\xi)].
\end{equation}
The optimal value of the above problem yields a lower bound for the optimal value of the original two-stage SP.
If $\tilde{x}$ is a global optimizer of \reff{eq:alg:lb}, and given $f(x)-\tilde{f}(x)\ge 0$
for every $x\in X$, it follows that 
\[
\tilde{f}(\tilde{x})\quad \le\quad \min\limits_{x\in X}\,f(x)
\quad\le\quad f(\tilde{x}).
\]
In the case where $\tilde{f}(\tilde{x}) = f(\tilde{x})$, we can confirm the global optimality of $\tilde{x}$ for the
original two-stage SP. Otherwise, we can use $\tilde{x}$ to refine the probability
measures $\nu$ in \reff{eq:maxim:int} or $\nu_i$ in \reff{eq:maxim:int:dd}, facilitating the determination of a subsequent lower approximating function and an improved objective value of \eqref{eq:alg:lb}.
Since \reff{eq:maxim:int} seeks to minimize $\int_{\mc{F}} |f_2(x,\xi)-p(x,\xi)|{\tt d}\nu$,
we suggest updating
\[
\nu\,\coloneqq\, \alpha \nu + (1-\alpha)(\delta_{\tilde{x}}\times \mu)\quad 
\mbox{with a small $\alpha\in (0,1)$,}
\]
where $\delta_{\tilde{x}}$ denotes the Dirac measure supported at $\tilde{x}$.
This strategy ensures that the newly computed lower bound functions more accurately approximate the true recourse function in the 
neighborhood of previous candidate solutions. A similar strategy is recommended to update
$\nu_i\coloneqq \alpha\nu_i+(1-\alpha)\delta_{\tilde{x}}$ in \reff{eq:maxim:int}.
Moreover, it is desirable to ensure that the optimal objective values computed from the approximating problem \eqref{eq:alg:lb} exhibit an increasing trend along the iterations. Therefore, in the next iteration, we add the following  constraint to compute a new lower bound function:
\begin{equation}\label{eq:f1+E-f>0}
f_1(\tilde{x})+ \mathbb{E}_{\mu}[p(\tilde{x},\xi)] - \tilde{f}(\tilde{x}) \,\ge\,0.
\end{equation}
This iterative process is repeated  until the difference between  the computed largest lower bound and the
 smallest upper bound for the optimal value of \reff{eq:2stageSP} is sufficiently small.
We summarize the entire procedure in the following algorithm.
\begin{algorithm}
\label{def:alg}
For the two-stage SP \reff{eq:2stageSP},  proceed as follows:
\begin{description}
\item[Step 0  (Initialization):] 
Let $\alpha\in(0,1)$ be a given scalar, $\epsilon\ge 0$ be a given tolerance and $\nu$ be a 
probability measure supported on $\mc{F}$. Select the degree of polynomial lower approximating functions.
Set $v^+ \coloneqq +\infty$ and $v^- \coloneqq -\infty$.  

\item[Step 1 (Lower Approximating Functions Generation):]
Solve the optimization problem \reff{eq:maxim:int} to get a polynomial lower approximating function $p(x,\xi)$
at a given degree.

\item[Step 2 (Lower and Upper Bounds Update):]
Let $\tilde{f}(x)\coloneqq f_1(x)+\mathbb{E}_{\mu}[p(x,\xi)]$. Solve the optimization problem 
\reff{eq:alg:lb} for an optimal solution $\tilde{x}$. Update $v^- \coloneqq \max\{v^-,\tilde{f}(\tilde{x})\}$.
If $v^+ > f(\tilde{x})$, write $\tilde{x}^* \coloneqq \tilde{x}$ and update $v^+\coloneqq f(\tilde{x})$.
 
\item[Step 3 (Termination Check):]	 
If $v^+-v^-\le \epsilon$, let $\tilde{f}^*\coloneqq v^-$. Stop and output $\tilde{x}^*$ and $\tilde{f}^*$ as
an (approximate)  optimal solution  and an optimal value of \reff{eq:2stageSP}, respectively.
Otherwise, add the new constraint \reff{eq:f1+E-f>0} in \reff{eq:maxim:int} and update 
$\nu\,\coloneqq\, \alpha \nu + (1-\alpha)(\delta_{\hat{x}}\times\mu)$. Then go back to Step~1.
\end{description}
\end{algorithm}
We make some remarks for the above algorithm.

In Step~0, the degree of polynomial lower bound functions is predetermined for the sake of computational feasibility. When $\mc{F}$ is a simple set such as boxes, simplex or balls,
the probability measure $\nu$ can be conveniently chosen to be the uniform distribution. 
In cases where $\mc{F}$ is compact yet possesses complex geometrical characteristics, 
we often construct $\nu$ as a finitely atomic measure derived from sampling procedures.
For instance, if $\mc{F}\subseteq [-R,R]^{n_1\times n_0}$ for  for some sufficiently large $R>0$, 
we  would first generate samples following distribution supported on $[-R,R]^{n_1\times n_0}$,
and then select those in $\mc{F}$ as the finite support of $\nu$.

In Step~1, the optimization problem \reff{eq:maxim:int} is a linear conic optimization problem
with a nonnegative polynomial cone. This problem can be relaxed to a hierarchy of linear semidefinite programs. 
Under the archimedean assumption, we can solve for a sequence of optimizing polynomials 
of \reff{eq:maxim:int} from these relaxations. 
In Step~2, \reff{eq:alg:lb} is a deterministic polynomial optimization problem, 
which can be solved globally by Moment-SOS relaxations.
Detailed discussions on Moment-SOS relaxations are given in Section~\ref{sc:lco}.

In Steps~2 and 3, one needs to compute the expectation $\mathbb{E}_{\mu}[\cdot ]$ to 
evaluate $f(\tilde{x})$, which can be estimated via the sample average when $\xi$ follows a continuous distribution. 
The implementation of such methods is introduced in Section~\ref{sc:numexp}.
It is clear that $v^+$ is an upper bound and $v^-$ is a lower bound for the optimal value of 
\reff{eq:2stageSP}. Notice that when the algorithm terminates, the output solution $\tilde{x}^*$ 
satisfies $f(\tilde{x}^*) = v^-$, but $\tilde{x^*}$ may not be the optimizer $\tilde{x}$ 
computed in the last iterate.
\begin{prop}\label{prop:alg1}
Suppose that  $f^*$ is the global optimal value of \reff{eq:2stageSP}.
If Algorithm~\ref{def:alg} terminates with an output pair $(\tilde{x}^*, \tilde{f}^*)$,
then 
\[\tilde{f}^* \,\le\, f^*\,\le\,\tilde{f}^*+\epsilon,\quad 
f(\tilde{x}^*)-\epsilon\,\le\,f^* \,\le\, f(\tilde{x}^*).\]
For the special case where $\epsilon = 0$, we have $f^*=\tilde{f}^*$ and $\tilde{x}^*$ is 
a global optimal solution of \reff{eq:2stageSP}.
\end{prop}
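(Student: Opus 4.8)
The plan is to track exactly what the quantities $v^+$ and $v^-$ represent at the moment of termination and to use the two sandwiching inequalities already established in the text, namely $\tilde f(\tilde x)\le \min_{x\in X} f(x)\le f(\tilde x)$ for any global optimizer $\tilde x$ of \reff{eq:alg:lb}. First I would argue that throughout the execution of Algorithm~\ref{def:alg} the invariant $v^-\le f^*\le v^+$ is maintained. For $v^+$: it is only ever updated to $f(\tilde x)$ for some $\tilde x\in X$, and every feasible point of \reff{eq:2stageSP} has objective value at least $f^*$, so $v^+\ge f^*$ always. For $v^-$: it is only ever updated to $\tilde f(\tilde x)$ where $\tilde x$ is a global minimizer of the surrogate problem \reff{eq:alg:lb}; since $p(x,\xi)$ is a valid lower approximating function, $f(x)-\tilde f(x)=\mathbb{E}_\mu[f_2(x,\xi)-p(x,\xi)]\ge 0$ on $X$, hence $\tilde f(\tilde x)=\min_{x\in X}\tilde f(x)\le \min_{x\in X} f(x)=f^*$, giving $v^-\le f^*$. (One should also note the constraint \reff{eq:f1+E-f>0} added at the end of Step~3 only restricts the feasible set of the subproblem \reff{eq:maxim:int}; it does not invalidate $f(x)-\tilde f(x)\ge 0$, since any polynomial feasible for the augmented problem is still feasible for the original conic constraint $F-p\in\mathscr P(K)$.)

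Next I would use the termination condition. When the algorithm stops, it has just checked $v^+-v^-\le\epsilon$ and set $\tilde f^*\coloneqq v^-$. Combining with the invariant $v^-\le f^*\le v^+$ gives immediately
\[
\tilde f^* \;=\; v^-\;\le\; f^*\;\le\; v^+\;\le\; v^-+\epsilon\;=\;\tilde f^*+\epsilon,
\]
which is the first displayed chain. For the second chain, recall from Step~2 that the output point $\tilde x^*$ is precisely the recorded incumbent, and the bookkeeping guarantees the equality $f(\tilde x^*)=v^+$ at termination (each time $v^+$ is lowered to $f(\tilde x)$ the point $\tilde x$ is simultaneously stored as $\tilde x^*$, and $\tilde x^*$ is updated whenever $v^+$ is). Hence $f(\tilde x^*)=v^+\le v^-+\epsilon\le f^*+\epsilon$ and $f(\tilde x^*)=v^+\ge f^*$, yielding $f(\tilde x^*)-\epsilon\le f^*\le f(\tilde x^*)$.

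Finally, the special case $\epsilon=0$ is just the above with the inequalities collapsing: $\tilde f^*\le f^*\le\tilde f^*$ forces $f^*=\tilde f^*$, and $f^*\le f(\tilde x^*)\le f^*$ forces $f(\tilde x^*)=f^*$, so $\tilde x^*\in X$ attains the global optimum. The only genuinely delicate point — and the one I would spend the most care on — is verifying that the invariant $v^-\le f^*$ is not broken by the added constraint \reff{eq:f1+E-f>0} in later iterations; the resolution is simply that this extra constraint shrinks the feasible region of the lower-approximation problem but every feasible $p$ there still satisfies $F(x,y,\xi)-p(x,\xi)\in\mathscr P(K)$, so \reff{eq:lower bound} and hence $f\ge\tilde f$ on $X$ continue to hold. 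Everything else is elementary bookkeeping about the updates of $v^+,v^-,\tilde x^*$ in Steps~2 and~3.
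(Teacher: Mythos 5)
Your proof is correct and follows essentially the same route as the paper's: the lower-approximation property gives $v^-\le f^*$, feasibility of the incumbent gives $f^*\le v^+=f(\tilde{x}^*)$, and the termination test $v^+-v^-\le\epsilon$ yields both displayed chains. Your extra bookkeeping (tracking the invariant across iterates and checking that the added constraint \reff{eq:f1+E-f>0} preserves the conic feasibility, hence $\tilde f\le f$ on $X$) is a slightly more careful rendering of the same argument, since the paper tacitly treats $\tilde f^*$ and $\tilde{x}^*$ as coming from one iterate.
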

\begin{proof}
By given conditions, $\tilde{x}^*$ is the optimizer of \reff{eq:alg:lb} at  some iterate $t$.
Let $\tilde{f}_t(x)$ denote the objective function of \reff{eq:alg:lb} at the same iterate.
Since $f(x)-\tilde{f}_t(x)\ge 0$ for every $x\in X$, we have
\[ \tilde{f}^*\,=\, \min\limits_{x\in X} \tilde{f}_t(x) \,\le\,  \min\limits_{x\in X} f(x) 
\,=\,f^*\,\le\, f(\tilde{x}^*). \]
For Algorithm~\ref{def:alg} to terminate, we must have $f(\tilde{x}^*) - \tilde{f}^*\le \epsilon$,
thus $f(\tilde{x}^*)-\epsilon\le f^*\le \tilde{f}^*+\epsilon$.
For the special case where $\epsilon = 0$, we have $f(\tilde{x}^*) = f^* =\tilde{f}^*$,
so $\tilde{x}^*$ is a global optimizer of \reff{eq:2stageSP}.
\end{proof}

\subsection{The case where $\xi$ has a finite support}
In this subsection, we
consider the special case where $\xi$ possesses a finite support $S=\{ \xi^{(1)}, \ldots, \xi^{(r)}\}$.
Suppose
\begin{equation}\label{eq:mudis}
\mu \,=\, \lambda_1\delta_{\xi^{(1)}} + \cdots +\lambda_r\delta_{\xi^{(r)}}, 
\end{equation}
where each $\lambda_i>0$ and $\lambda_1+\cdots+\lambda_r=1$. Under this structure, 
we can construct the lower bound function of $p(x,\xi)$ as in \reff{eq:lbf:dis}:
\[p(x,\xi)\,\coloneqq\, \lambda_1p_1(x)+\cdots  +\lambda_rp_r(x), \]
where each $p_i(x)$ is solved from the linear conic optimization \reff{eq:maxim:int:dd}.
Then we propose the following variant  of Algorithm~\ref{def:alg}.
\begin{algorithm}
\label{def:alg:dis}
For the two-stage SP \reff{eq:2stageSP} with $\nu$ given in \reff{eq:mudis}, proceed as follows:
\begin{description}
\item[Step 0  (Initialization):] 
Let $\alpha\in(0,1)$ be a given scalar and $\epsilon\ge 0$ be a given tolerance.
Choose the degree of lower bound functions. Set $T \coloneqq [r]$ and  $v^+ \coloneqq +\infty$.
For every $i\in T$, fix a probability measure $\nu_i$ supported on $\mc{F}_i$,
and let $v_i^-\coloneqq -\infty$.

\item[Step 1 (Lower Approximating Functions Generation):]
For every $i\in T$, solve the optimization problem \reff{eq:maxim:int:dd} for a polynomial
lower approximating function $p_i(x)$ of the given degree.

\item[Step 2 (Lower and Upper Bounds Update):]
Let $\tilde{f}(x)\coloneqq f_1(x)+\mathbb{E}_{\mu}[p(x,\xi)]$ with $p(x,\xi)$ as in \reff{eq:lbf:dis}. 
Solve the optimization problem \reff{eq:alg:lb}  to get an optimal solution $\tilde{x}$. For each $i\in T$, 
update $v_i^-\coloneqq \max\{v_i^-, p_i(\tilde{x})\}$. If $v^+>f(\tilde{x})$, write 
$\tilde{x}^*\coloneqq \tilde{x}$ and update $v^+\coloneqq f(\tilde{x})$.
 
\item[Step 3 (Termination Check):]	 
Update $T\,\coloneqq \, \{i\in [r]: f_2(\tilde{x}^*, \xi^{(i)}) - v_i^->\epsilon\}$.
If $T=\emptyset$, let $\tilde{f}^* \coloneqq \lambda_1v_1^- + \cdots +\lambda_rv_r^-$.
Stop and output $\tilde{x}^*$ and $\tilde{f}^*$ as
an (approximate)  optimal solution  and an optimal value of \reff{eq:2stageSP}, respectively. Otherwise, add the new constraint 
$p_i(\tilde{x}) \ge v_i^-$ to \reff{eq:maxim:int:dd} 
and update $\nu_i := \alpha\nu_i + (1-\alpha) \delta_{\tilde{x}}$ for all $i\in T$.
\end{description}
\end{algorithm}

The above framework has a major difference from Algorithm~\ref{def:alg}. In each iteration,
Algorithm~\ref{def:alg}  computes a single lower bound function $p(x,\xi)$, whereas
Algorithm~\ref{def:alg:dis} computes $|S|$ many polynomials $p_i(x)$ each time. When $|S|$ is small
and $\xi$ is of large dimension, Algorithm~\ref{def:alg:dis} can be more computationally 
efficient than Algorithm~\ref{def:alg}. By setting $\deg(p(x,\xi)) = \deg(p_i(x))$, the problem 
 \reff{eq:maxim:int:dd} has much fewer variables than \reff{eq:maxim:int}, which allows for faster and more robust computation of each individual 
optimization problem. When $S$ contains infinitely many elements,
 Algorithm~\ref{def:alg:dis} may
still be applied using sampling methods, although the  number of lower bound functions computed in each
iteration increases linearly with the size of the samples.

Similar to Algorithms~\ref{def:alg}, 
all optimization problems in Algorithm~\ref{def:alg:dis} can be efficiently solved using Moment-SOS relaxations. 
Additionally, Algorithm~\ref{def:alg:dis} shares similar convergence properties as described in Proposition~\ref{prop:alg1}.

\begin{prop}\label{prop:alg:dis}
Suppose that  $f^*$ is the global optimal value of \reff{eq:2stageSP}, where $\xi$ possesses a finite
support $S = \{\xi^{(1)}, \ldots, \xi^{(r)}\}$.
If Algorithm~\ref{def:alg:dis} terminates with an output pair $(\tilde{x}^*, \tilde{f}^*)$,
then 
\[\tilde{f}^* \,\le\, f^*\,\le\,\tilde{f}^*+\epsilon,\quad 
f(\tilde{x}^*)-\epsilon\,\le\,f^* \,\le\, f(\tilde{x}^*).\]
For the special case where $\epsilon = 0$, we have $f^*=\tilde{f}^*$ and $\tilde{x}^*$ is 
a global optimal solution of \reff{eq:2stageSP}.
\end{prop}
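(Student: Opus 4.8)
The plan is to mirror the two-part argument behind Proposition~\ref{prop:alg1}, adapting the bookkeeping to the scenario-wise construction \reff{eq:lbf:dis}. The first step is to show that the surrogate objective always underestimates the true one. Every $p_i$ returned in Step~1 is feasible for \reff{eq:maxim:int:dd}, so it satisfies \reff{eq:f2-pi>0inFi}, i.e., $p_i(x) \le f_2(x,\xi^{(i)})$ for all $x \in \mc{F}_i$ (and trivially for the remaining $x\in X$, where $f_2(x,\xi^{(i)})=+\infty$). Hence $p(x,\xi) = \lambda_1 p_1(x) + \cdots + \lambda_r p_r(x)$ obeys $\mathbb{E}_{\mu}[p(x,\xi)] \le \mathbb{E}_{\mu}[f_2(x,\xi)]$, and therefore $\tilde f(x) \le f(x)$ for every $x \in X$. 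Since the point $\tilde x$ computed in Step~2 globally minimizes $\tilde f$ over $X$, at every iteration we get $\tilde f(\tilde x) = \min_{x\in X}\tilde f(x) \le \min_{x\in X} f(x) = f^*$, exactly as in Proposition~\ref{prop:alg1}.

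The second step is to carry the accumulated bounds through termination. Because each $v_i^-$ is a running maximum of the values $p_i(\tilde x)$ and the monotone cuts $p_i(\tilde x) \ge v_i^-$ appended in Step~3 force the later subproblems to retain those values, at termination the incumbent $\tilde x^*$ and the bounds $\{v_i^-\}$ combine into $\tilde f^*$ with $\tilde f^* \le \tilde f(\tilde x) \le f^*$ for the relevant iterate. For the reverse inequality, $\tilde x^* \in X$ gives $f^* \le f(\tilde x^*) = v^+$ directly. The termination test $T=\emptyset$ means $f_2(\tilde x^*,\xi^{(i)}) - v_i^- \le \epsilon$ for every $i\in[r]$; multiplying the $i$-th inequality by $\lambda_i$ and summing yields $f(\tilde x^*) \le \tilde f^* + \epsilon$. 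Chaining everything gives $\tilde f^* \le f^* \le f(\tilde x^*) \le \tilde f^* + \epsilon$, hence also $f(\tilde x^*)-\epsilon \le f^* \le f(\tilde x^*)$; when $\epsilon = 0$ all inequalities collapse to $\tilde f^* = f^* = f(\tilde x^*)$, so $\tilde x^*$ is a global optimizer of \reff{eq:2stageSP}.

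The underestimation step and the final chain of inequalities are essentially identical to Proposition~\ref{prop:alg1}, so I expect the main obstacle to be the simultaneous treatment of the $r$ separate subproblems \reff{eq:maxim:int:dd}: the $v_i^-$ are running maxima taken over possibly different iterations and different surrogate minimizers, while the incumbent $\tilde x^*$ may itself change along the run, so one must argue that at termination the collection $\{v_i^-\}$ and $\tilde x^*$ are mutually consistent — that $\tilde f^*$ is a genuine lower bound for $f^*$, not merely a valid value for the surrogate evaluated at some stale point. The refresh rule (only the indices in the active set $T$ update their $v_i^-$) together with the monotone cuts $p_i(\tilde x)\ge v_i^-$ is precisely what enforces this consistency, and making that implication explicit is the one piece of substantive content beyond the proof of Proposition~\ref{prop:alg1}.
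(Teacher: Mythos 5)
Your first step (the pointwise underestimation $\tilde f\le f$ on $X$, hence $\tilde f(\tilde x)\le f^*$ at every iteration) is correct and matches the paper. The genuine gap is at the crucial inequality $\tilde f^*\le f^*$, which is exactly the point you flag at the end and then defer rather than prove. Your sketch for it --- that at ``the relevant iterate'' one has $\tilde f^*\le \tilde f(\tilde x)\le f^*$ --- cannot be made to work as stated: since each $v_i^-$ is a running maximum containing $p_i(\tilde x)$, at any fixed iterate one has $\lambda_1 v_1^-+\cdots+\lambda_r v_r^-\,\ge\,\lambda_1 p_1(\tilde x)+\cdots+\lambda_r p_r(\tilde x)$, so the running-maximum structure bounds $\tilde f^*$ \emph{from below} by the surrogate recourse value at the iterate, which is the opposite of what you need; moreover the maxima defining the different $v_i^-$ may be attained at different iterates, so no single surrogate value $\tilde f(\tilde x)$ dominates their weighted sum. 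The cuts $p_i(\tilde x)\ge v_i^-$ appended in Step~3 constrain the \emph{next} polynomial only at the \emph{previous} point $\tilde x$, not at the next minimizer, so they do not supply the consistency you invoke. In short, you have correctly identified the one substantive step beyond Proposition~\ref{prop:alg1}, but the argument you offer for it is invalid and the step is left unproven.

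The paper closes this step by a scenario-wise (not iterate-wise) argument: it asserts that each $v_i^-$ is a lower bound for $f_2(x,\xi^{(i)})$ over all $x\in X$, so that for every $x\in X$ one gets $\tilde f^*=\sum_{i}\lambda_i v_i^-\le \sum_i\lambda_i f_2(x,\xi^{(i)})\le f(x)$, and hence $\tilde f^*\le f^*$; the termination test $T=\emptyset$ then gives $f(\tilde x^*)-\tilde f^*=\sum_i\lambda_i\bigl(f_2(\tilde x^*,\xi^{(i)})-v_i^-\bigr)\le\epsilon$, and the remaining chain is as in Proposition~\ref{prop:alg1}. Your handling of the termination inequality (multiply by $\lambda_i$ and sum) agrees with the paper's; what is missing is precisely the per-scenario lower-bound property of the $v_i^-$ that the paper uses in place of your per-iterate aggregation.
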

\begin{proof}
It is evident that $f(\tilde{x}^*)\ge f^*$. Recall that $\tilde{f}^* \coloneqq \lambda_1v_1^- + \cdots +\lambda_rv_r^-$.
Since each $v_i^-$ provides a lower bound for $f_2(x,\xi^{(i)})$ over all $x\in X$, it follows that $f^*\ge \tilde{f}^*$. Upon the termination of  Algorithm~\ref{def:alg:dis}, the condition
$f_2(\tilde{x}, \xi^{(i)})-v_i^-\le \epsilon$ must hold for each $i\in[r]$. Consequently, we have
\[
f(\tilde{x}^*) - \tilde{f}^* 
\,=\, \sum\limits_{i=1}^r \lambda_i (f_2(x,\xi^{(i)}) - v_i^-)
\,\le\, \epsilon \Big(\sum\limits_{i=1}^r \lambda_i\Big) \,=\,\epsilon.
\]
Employing similar arguments to that in Proposition~\ref{prop:alg1}, one can derive all stated results.
\end{proof}

\section{Moment-SOS Relaxations}\label{sc:lco}
In this section, we introduce Moment-SOS relaxation methods for solving linear conic optimization and
polynomial optimization problems in Algorithms~\ref{def:alg} and \ref{def:alg:dis}.
For the two-stage SP \reff{eq:2stageSP}, denote tuples of constraining polynomials
\begin{equation}\label{eq:g1g2}
g_0(\xi) \coloneqq (g_{0,i}(\xi))_{i\in\mc{I}_0},\quad
g_1(x)\,\coloneqq \, (g_{1,i}(x))_{i\in\mc{I}_1} ,\quad 
g_{2}(x,y,\xi) \,\coloneqq \, (g_{2,i}(x,y,\xi))_{i\in\mc{I}_2}.
\end{equation}

\subsection{Relaxations of problem \reff{eq:maxim:int}}
The linear conic optimization problem \reff{eq:maxim:int} is
\[
\left\{\begin{array}{cl}
\max\limits_{p \in \re[x,\xi]} & \int_{\mc{F}} p(x,\xi) {\tt d}{\nu}\\
\st & F(x,y,\xi)-p(x,\xi)\in\mathscr{P}(K),
\end{array}
\right.
\]
where $\nu$ is a given measure and $K$ is a semialgebraic set determined by
\begin{equation}\label{eq:Kalg} 
g_0(\xi)\ge 0,\quad  g_1(x)\ge 0,\quad  g_2(x,y,\xi)\ge 0. 
\end{equation}
The nonnegative polynomial cone $\mathscr{P}(K)$ typically does not have a 
convenient expression in computations. Note  that $g_0,g_1,g_2$ can all be viewed 
as tuples of polynomials in $(x,y,\xi)$. Denote the quadratic module as
\[ \qmod{g_0,g_1,g_2}\,\coloneqq \, \qmod{g_0}+\qmod{g_1}+\qmod{g_2}, \]
where (recall $\Sigma[x,y,z]$ is the SOS polynomial cone)
\[ \qmod{g_j} = \sum_{i\in \mc{I}_j} \big( g_{j,i}(x,y,\xi)\cdot \Sigma[x,y,z] \big),\quad j = 1,2,3. \]
Let $\qmod{g_0,g_1,g_2}_{2k}\coloneqq \qmod{g_0,g_1,g_2}\cap \re[x,y,\xi]_{2k}$ be 
the $k$th order truncation. It can be explicitly expressed with semidefinite constraints.
We can use these truncated quadratic modules to approximate $\mathscr{P}(K)$.
Indeed, for a given degree $d$, if $\qmod{g_0,g_1,g_2}$ is archimedean, it holds that 
\begin{equation}\label{eq:PdK}
\mbox{int}\Big(\mathscr{P}_d(K)\Big) \,=\, 
\bigcap\limits_{k\ge \lceil d/2\rceil} \Big(\qmod{g_0,g_1,g_2}_{2k}\cap \re[x,y,\xi]_d\Big).
\end{equation}
Then we can construct a hierarchy of semidefinite relaxations of \reff{eq:maxim:int}.
For $k$ with $2k\ge \deg(F)$, the $k$th order SOS relaxation of \reff{eq:maxim:int}   is
\begin{equation}\label{eq:polysos:k}
\left\{\begin{array}{cl}
\max\limits_{p\in\re[x,\xi]} & \int_{\mc{F}} p(x,\xi){\tt d}\nu\\
\st & F(x,y,\xi) - p(x,\xi)\in \qmod{g_0,g_1,g_2}_{2k}.
\end{array}
\right.
\end{equation}
Its dual problem is called the $k$th order moment relaxation of \reff{eq:maxim:int}.
The problem \reff{eq:polysos:k} is a linear conic optimization problem, where the coefficient vector of $p(x,\xi)$
is the decision vector. For $p(x,\xi)$ to be feasible for \reff{eq:polysos:k}, its total degree must be 
smaller than or equal to $2k$. Since $\qmod{g_0,g_1,g_2}_{2k}$ can be expressed by semidefinite 
constraints, the optimization problem \reff{eq:polysos:k} can be solved efficiently by interior point methods.
\begin{theorem}\label{thm:asymconv}
Suppose $\qmod{g_0,g_1,g_2}$ is archimedean and $f_2(x,\xi)$ is continuous on $\mc{F}$.
For a given probability measure $\nu$, problem \reff{eq:polysos:k} is solvable with an optimal solution 
$p^{(k)}(x,\xi)$ when $k$ is large enough, and 
\[
\int_{\mc{F}} |f_2(x,\xi)-p^{(k)}(x,\xi)| {\tt d}{\nu}\rightarrow 0
\quad\mbox{as}\quad k\rightarrow\infty.
\]
\end{theorem}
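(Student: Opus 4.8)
The plan is to combine the density conclusion of Theorem~\ref{thm:gam_equal} with the representation~\eqref{eq:PdK} of the interior of the truncated nonnegative polynomial cone. First I would show that the feasible region of~\eqref{eq:polysos:k} is nonempty and that its optimal value is attained for every sufficiently large $k$. To this end, fix an $\varepsilon > 0$ and use the Weierstrass approximation argument from the proof of Theorem~\ref{thm:gam_equal} to obtain a polynomial $q_\varepsilon \in \re[x,\xi]$ with $|f_2 - q_\varepsilon| \le \varepsilon$ on $\mc{F}$, so that $\tilde q_\varepsilon \coloneqq q_\varepsilon - 2\varepsilon$ satisfies $F(x,y,\xi) - \tilde q_\varepsilon(x,\xi) \ge \varepsilon > 0$ on $K$; hence $F - \tilde q_\varepsilon$ lies in the interior of $\mathscr{P}_d(K)$ for $d = \deg(F - \tilde q_\varepsilon)$. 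By~\eqref{eq:PdK}, which applies because $\qmod{g_0,g_1,g_2}$ is archimedean, this polynomial belongs to $\qmod{g_0,g_1,g_2}_{2k} \cap \re[x,y,\xi]_d$ once $k$ is large enough, so~\eqref{eq:polysos:k} is feasible for all large $k$. Solvability (attainment) then follows from boundedness of the objective — it is bounded above by $\int_{\mc F} f_2 \, {\tt d}\nu$ since every feasible $p$ of~\eqref{eq:polysos:k} is also feasible for~\eqref{eq:maxim:int} — together with the standard closedness of $\qmod{g_0,g_1,g_2}_{2k}$ when the underlying set is compact (guaranteed by archimedeanness), plus a normalization to restrict the coefficient vector of $p$ to a compact slice.

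Next I would establish convergence of the optimal values. Let $\gamma_k$ denote the optimal value of~\eqref{eq:polysos:k} and $\gamma^\star = \int_{\mc F} f_2 \, {\tt d}\nu$ the optimal value of~\eqref{eq:maxim:int} (equal to the latter by Theorem~\ref{thm:gam_equal}). Since $\qmod{g_0,g_1,g_2}_{2k} \subseteq \qmod{g_0,g_1,g_2}_{2k+2} \subseteq \mathscr{P}(K)$, the sequence $\{\gamma_k\}$ is nondecreasing and bounded above by $\gamma^\star$. For the reverse inequality, reuse the $\tilde q_\varepsilon$ above: for $k$ large enough it is feasible for~\eqref{eq:polysos:k}, and $\int_{\mc F} \tilde q_\varepsilon \, {\tt d}\nu \ge \int_{\mc F} f_2\,{\tt d}\nu - 3\varepsilon = \gamma^\star - 3\varepsilon$ (using $\nu(\mc F) = 1$). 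Hence $\liminf_k \gamma_k \ge \gamma^\star - 3\varepsilon$ for every $\varepsilon > 0$, giving $\gamma_k \uparrow \gamma^\star$.

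Finally I would convert value convergence into $\mc{L}^1(\nu)$ convergence. For any feasible $p^{(k)}$ of~\eqref{eq:polysos:k} we have $F(x,y,\xi) - p^{(k)}(x,\xi) \ge 0$ on $K$, and taking the minimum over $y \in Y(x,\xi)$ for each fixed $(x,\xi) \in \mc{F}$ yields $f_2(x,\xi) - p^{(k)}(x,\xi) \ge 0$ pointwise on $\mc{F}$; therefore
\[
\int_{\mc F} |f_2(x,\xi) - p^{(k)}(x,\xi)| \, {\tt d}\nu \;=\; \int_{\mc F} f_2(x,\xi)\,{\tt d}\nu - \int_{\mc F} p^{(k)}(x,\xi)\,{\tt d}\nu \;=\; \gamma^\star - \gamma_k \;\longrightarrow\; 0,
\]
where in the last step $p^{(k)}$ is taken to be an optimal solution of~\eqref{eq:polysos:k}. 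This is exactly the claimed statement. I expect the main obstacle to be the attainment (solvability) of~\eqref{eq:polysos:k}: one must argue carefully that the supremum is achieved, which requires the closedness of the truncated quadratic module (valid under archimedeanness / compactness of $K$, via Putinar-type arguments) and a coercivity or normalization argument to bound a maximizing sequence of coefficient vectors of $p$ — for instance by noting that the admissible $p$ lie in an affine slice of a fixed-dimensional space on which the objective is linear and bounded, so one can pass to a convergent subsequence of near-optimal feasible points. The $\mc{L}^1$ part and the value-convergence part are then routine given Theorem~\ref{thm:gam_equal} and~\eqref{eq:PdK}.
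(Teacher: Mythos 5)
Your proposal is correct and follows essentially the same route as the paper's proof: a Weierstrass-type near-optimal polynomial shifted down to make $F-p$ strictly positive on $K$, Putinar's Positivstellensatz (of which \reff{eq:PdK} is the degree-truncated form) to get feasibility of the truncation \reff{eq:polysos:k} for large $k$, attainment from boundedness and closedness of $\qmod{g_0,g_1,g_2}_{2k}$, and the sign condition $f_2-p^{(k)}\ge 0$ on $\mc{F}$ together with monotonicity of the optimal values to convert value convergence into $\mc{L}^1(\nu)$ convergence. Your explicit flagging of the attainment subtlety is, if anything, slightly more careful than the paper's one-line justification, but the argument is the same in substance.
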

\begin{proof} 
Under the archimedean condition, $K$ in \reff{eq:Kalg} is compact and $\qmod{g_0,g_1,g_2}_{2k}$
is closed for every $k$. Then $\mc{F}$ is also compact as a projection of $K$ onto the $(x,\xi)$ space.
Since $f_2(x,\xi)$ is continuous on $\mc{F}$, by Theorem~\ref{thm:gam_equal},  
for every $\varepsilon>0$, there exists a polynomial $p(x,\xi)$ that is feasible for 
\reff{eq:maxim:int} and satisfies $\int_{\mc{F}} |f_2(x,\xi)-p(x,\xi)| {\tt d}{\nu}\le \varepsilon$.
Then 
\[ F(x,y,\xi)- (p(x,\xi)-\varepsilon)\, \ge\, \varepsilon \,>\,0,\quad \forall (x,y,\xi)\in K. \] 
By Putinar's Positivstellensatz, $F(x,y,\xi) - (p(x,\xi)-\varepsilon)\in \qmod{g_0,g_1,g_2}$.
So there exists $k_{\varepsilon}\in\N$ that is sufficiently large such that the polynomial 
$p(x,\xi)-\varepsilon$ is feasible for \reff{eq:polysos:k} at the $k_{\varepsilon}$th relaxation. 
At the $k_{\varepsilon}$th relaxation, \reff{eq:polysos:k} is bounded from above and 
has a nonempty closed feasible set,  so it is solvable with an optimizer $p^{(k_{\varepsilon})}(x,\xi)$.
Then we have 
\[
\int_{\mc{F}} |f_2(x,\xi) - p^{(k_{\varepsilon})}(x,\xi)|{\tt d}\nu\,\le\,
\int_{\mc{F}} |f_2(x,\xi) - (p(x,\xi)-\varepsilon)| {\tt d}{\nu}\,\le\, 2\varepsilon.
\]
Since $\qmod{g_0,g_1,g_2}_{2k}\subseteq \qmod{g_0,g_1,g_2}_{2k+2}$ for every $k$,
the optimal value of \reff{eq:polysos:k} increases monotonically as the relaxation order grows.
In other words, $k_{\varepsilon}\rightarrow \infty$ as $\varepsilon\rightarrow 0$.
So the conclusion holds.
\end{proof}
For the special case that $f_2(x,\xi)$ is a polynomial and $F-f_2\in \qmod{g_0,g_1,g_2}$,
the true recourse function is an optimizer of \reff{eq:polysos:k} when $k$ is big enough.
Since $p(x,\xi)$ has two kinds of variables $x$ and $\xi$, one can also use a pair of degrees 
as the relaxation order. Denote 
\begin{equation}\label{eq:dx}
d_1 \,\coloneqq\ \max \left\{\, \deg_x(F),\quad  \deg(g_1),
\quad \deg_x(g_2)  \,\right\},
\end{equation}
\begin{equation}\label{eq:dxi}
d_2 \,\coloneqq\, \max \left\{ \deg_{\xi}(F),\quad \deg(g_0),
\quad \deg_{\xi}(g_2) \,\right\}.
\end{equation}
Let $\mathbf{k} = (k_1,k_2,k)$ such that $k_1\ge d_1$, $k_2\ge d_2$, $k =\max\{\lceil (k_1+k_2)/2\rceil, \lceil \deg(F)/2\rceil\}$.
The $\mathbf{k}$th order SOS relaxation of \reff{eq:maxim:int} is
\begin{equation}
\label{eq:polysos:veck}
\left\{
\begin{array}{cl} 
\max & \int_{\mc{F}} p(x,\xi){\tt d} \nu\\[0.05in]
\st & F(x,y,\xi) - p(x,\xi) \in \qmod{g_1,g_2,g_3}_{2k}.\\[0.05in]
& p(x,\xi) \in \re[x,\xi]_{k_1,k_2}.\\
\end{array}
\right.
\end{equation}
In the above, $\re[x,\xi]_{k_1,k_2}$ is the set of real polynomials with partial degrees in $x$ 
no more than $k_1$ and partial degree in $\xi$ no more than $k_2$.
Let $v^{(k)}$ denote the optimal value of \reff{eq:polysos:k} and let $v^{(\mathbf{k})}$ 
denote the optimal value of \reff{eq:polysos:veck}.
We have $v^{(k)} \,\ge \, v^{(\mathbf{k})}$ for every $\mathbf{k} = (k_1,k_2,k)$
such that $k = \lceil(k_1+k_2)/2\rceil$.
\begin{corollary}
Suppose $\qmod{g_1,g_2,g_3}$ is archimedean and $f_2(x,\xi)$ is continuous on $\mc{F}$.
For a given measure $\nu$, problem \reff{eq:polysos:veck} is solvable with an optimal solution 
$p^{(\mathbf{k})}(x,\xi)$ with $\mathbf{k} = (k_1,k_2,k)$ when $\min(\mathbf{k})$ is large enough,
and
\[ \int_{\mc{F}} |f_2(x,\xi)-p^{(\mathbf{k})}(x,\xi)| {\tt d}{\nu}\rightarrow 0
\quad\mbox{as}\quad \min(\mathbf{k})\rightarrow\infty. \]
\end{corollary}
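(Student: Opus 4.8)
The plan is to run the argument of Theorem~\ref{thm:asymconv} while keeping track of the partial degrees in $x$ and $\xi$, so that the approximating polynomial produced there also meets the extra requirement $p\in\re[x,\xi]_{k_1,k_2}$ imposed in \reff{eq:polysos:veck}. First I would record two facts valid for every feasible $p$ of \reff{eq:polysos:veck}. Since $F-p\in\qmod{g_0,g_1,g_2}_{2k}\subseteq\mathscr{P}(K)$, we have $F(x,y,\xi)\ge p(x,\xi)$ on $K$; fixing $(x,\xi)\in\mc{F}$ and minimizing over $y\in Y(x,\xi)$ gives $f_2(x,\xi)\ge p(x,\xi)$ pointwise on $\mc{F}$. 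As in Theorem~\ref{thm:asymconv}, the archimedean hypothesis makes $K$ compact, hence its projection $\mc{F}$ compact, so $\int_{\mc{F}}f_2\,{\tt d}\nu$ is finite; it is an upper bound for the objective, and moreover $\int_{\mc{F}}|f_2-p|\,{\tt d}\nu=\int_{\mc{F}}f_2\,{\tt d}\nu-\int_{\mc{F}}p\,{\tt d}\nu$. Writing $v^{(\mathbf{k})}$ for the optimal value, $\mc{L}^1(\nu)$-convergence of an optimizer $p^{(\mathbf{k})}$ to $f_2$ is therefore equivalent to $v^{(\mathbf{k})}\to\int_{\mc{F}}f_2\,{\tt d}\nu$.

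Next, fix $\varepsilon>0$. By Weierstrass' theorem (as in Theorems~\ref{thm:gam_equal} and \ref{thm:asymconv}) there is $q_\varepsilon\in\re[x,\xi]$ with $|f_2-q_\varepsilon|\le\varepsilon$ on $\mc{F}$, so $F-(q_\varepsilon-\varepsilon)\ge\varepsilon>0$ on $K$; by Putinar's Positivstellensatz there is $k'_\varepsilon$ with $F-(q_\varepsilon-\varepsilon)\in\qmod{g_0,g_1,g_2}_{2k'_\varepsilon}$. Put $N_\varepsilon\coloneqq\max\{\deg_x(q_\varepsilon),\deg_\xi(q_\varepsilon),d_1,d_2,k'_\varepsilon\}$. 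For any admissible $\mathbf{k}=(k_1,k_2,k)$ with $\min(\mathbf{k})\ge N_\varepsilon$ one has $k_1\ge\deg_x(q_\varepsilon)$ and $k_2\ge\deg_\xi(q_\varepsilon)$, hence $q_\varepsilon-\varepsilon\in\re[x,\xi]_{k_1,k_2}$; and $k\ge\min(\mathbf{k})\ge N_\varepsilon\ge k'_\varepsilon$, so by nestedness of the truncated quadratic modules $F-(q_\varepsilon-\varepsilon)\in\qmod{g_0,g_1,g_2}_{2k}$. Thus $q_\varepsilon-\varepsilon$ is feasible for \reff{eq:polysos:veck}. The feasible set is then nonempty and closed (the truncated quadratic module is closed under the archimedean hypothesis, and the partial-degree restriction is a linear subspace) and the objective is bounded above, so exactly as in the proof of Theorem~\ref{thm:asymconv} the relaxation is solvable, with some optimizer $p^{(\mathbf{k})}$. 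Using $\int_{\mc{F}}1\,{\tt d}\nu=1$,
\[
0\ \le\ \int_{\mc{F}}|f_2-p^{(\mathbf{k})}|\,{\tt d}\nu
\ =\ \int_{\mc{F}}f_2\,{\tt d}\nu-v^{(\mathbf{k})}
\ \le\ \int_{\mc{F}}f_2\,{\tt d}\nu-\int_{\mc{F}}(q_\varepsilon-\varepsilon)\,{\tt d}\nu
\ \le\ 2\varepsilon .
\]
Since this holds for every admissible $\mathbf{k}$ with $\min(\mathbf{k})\ge N_\varepsilon$ and $\varepsilon$ was arbitrary, $\int_{\mc{F}}|f_2-p^{(\mathbf{k})}|\,{\tt d}\nu\to 0$ as $\min(\mathbf{k})\to\infty$, which also yields the claimed solvability for $\min(\mathbf{k})$ large enough.

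The only point that goes beyond Theorem~\ref{thm:asymconv} is checking that the extra constraint $p\in\re[x,\xi]_{k_1,k_2}$ does not obstruct feasibility — that $k_1,k_2$ can be pushed up enough to accommodate both the partial degrees of a good Weierstrass approximant and (through the induced total order $k$) the order of its Putinar certificate, with $\min(\mathbf{k})$ serving as the single driving parameter. This is precisely the bookkeeping done via $N_\varepsilon$ above, and I expect it to be the main (though modest) obstacle; the solvability assertion is taken in the same form and justified by the same argument as in Theorem~\ref{thm:asymconv}.
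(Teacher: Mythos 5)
Your proposal is correct and follows essentially the same route as the paper: the paper simply states that the corollary "is implied by Theorem~\ref{thm:asymconv}," and your argument is exactly the degree-bookkeeping verification of that implication, rerunning the Weierstrass--Putinar construction while checking that for $\min(\mathbf{k})$ large enough the approximant $q_\varepsilon-\varepsilon$ satisfies both the partial-degree restriction $\re[x,\xi]_{k_1,k_2}$ and the truncated quadratic module membership at order $k$. The only caveat is that your solvability claim rests on the same "bounded above with nonempty closed feasible set" assertion used in the paper's proof of Theorem~\ref{thm:asymconv}, so you are no less (and no more) rigorous than the original there.
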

\begin{proof}
This result is implied by Theorem~\ref{thm:asymconv}.
\end{proof}
We remark that the relaxation \reff{eq:polysos:veck} is more flexible than \reff{eq:polysos:k} in computations.
By adjusting the degrees of $x$ and $\xi$ separately, we can construct lower approximating functions $p(x,\xi)$ 
with different focus on the decision variables and the random variables. 
In addition, for a fixed $k = \lceil (k_1+k_2)/2\rceil$, problem \reff{eq:polysos:veck} has fewer variables than \reff{eq:polysos:k}, while the computed lower approximating functions may still be very efficient.
Here is such an example.
\begin{example}\label{ex:uni_uniform}
Consider a two-stage SP as in \reff{eq:2stageSP} with $x,y,\xi\in\re$, 
$f_1(x) = 0$, $\mu\sim \mc{U}(S)$, and 
\[
X = \{x\in\re: 1-x^2\ge 0\},\quad S = \{\xi\in\re: \xi(1-\xi)\ge 0 \},
\]
where $\mc{U}(S)$ denotes the uniform distribution on $S$.
The second-stage problem is given as
\[
\left\{\begin{array}{rl}
 f_2(x,\xi) = \min\limits_{y\in\re} & (x+\xi)y^3-\xi y^2+xy\\
\st & \xi^2-(y-x)^2\ge 0.
\end{array}
\right.
\] 
Clearly, the second-stage problem is feasible for every $x\in X$ and $\xi\in S$, so $\mc{F} = X\times S$.
Select $\nu$ to be the uniform probability measure supported on $\mc{F}$.
We solve lower approximating functions from the SOS relaxations \reff{eq:polysos:k} 
with different relaxation orders $\mathbf{k} = (k_1,k_2,k)$. The resulting polynomials are listed in the following table. 
\[
\begin{array}{|c|c|l|}
\hline
(k_1,k_2) & k & p^{(\mathbf{k})}(x,\xi)\\
\hline
(1,\,2) & 2 & -0.3426+0.4788x+2.2407\xi-3.1747x\xi-4.0833\xi^2+4.8810x\xi^2\\
\hline
(1,\,3) & 2 & -0.0042+0.0565x-0.3476\xi-1.1198x\xi+1.7471\xi^2+2.3027
x\xi^2\\
& &-3.5887\xi^3+0.9257x\xi^3\\
\hline
(2,\,2) & 2 & -0.4450+0.5490x+0.8802x^2+2.4376\xi-3.2883x\xi-0.4785x^2\xi\\
& &-4.1466\xi^2+ 4.9806x\xi^2 -0.5446x^2\xi^2\\
\hline
(2,\,3) & 3 & -0.0903-0.0036x+1.4816x^2-0.0754\xi+0.4759x\xi-3.9125x^2\xi\\&  &+1.0345\xi^2 -2.7192x\xi^2+5.9542x^2\xi^2-3.0738\xi^3+4.4429x\xi^3\\
& &-3.5727x^2\xi^3\\
\hline
\end{array}
\]
Then we compute $f^{(\mathbf{k})}(x)\coloneqq \mathbb{E}_{\mu}[p^{(\mathbf{k})}(x,\xi)]$ 
for each above $\mathbf{k}$ and plot them with the true expected recourse function 
$f(x) = \mathbb{E}_{\mu}[f_2(x,\xi)]$ in Figure~\ref{fig:uni_uniform}.
Specifically, the function $f^{(1,2,2)}$ is plotted in the dashed line, 
the function $f^{(1,3,2)}$ is plotted in the dotted line,
the function $f^{(2,2,2)}$ is plotted in the dash-dotted line,
the function $f^{(2,3,3)}$ is plotted in the plus sign line,
and the expected recourse $f$ is plotted in the solid line.
In addition, we plot global minimizers of all these $f^{(\mathbf{k})}(x)$ on $X$ in blue dots.
\begin{figure}[htb!]
\centering
\includegraphics[width=0.4\textwidth]{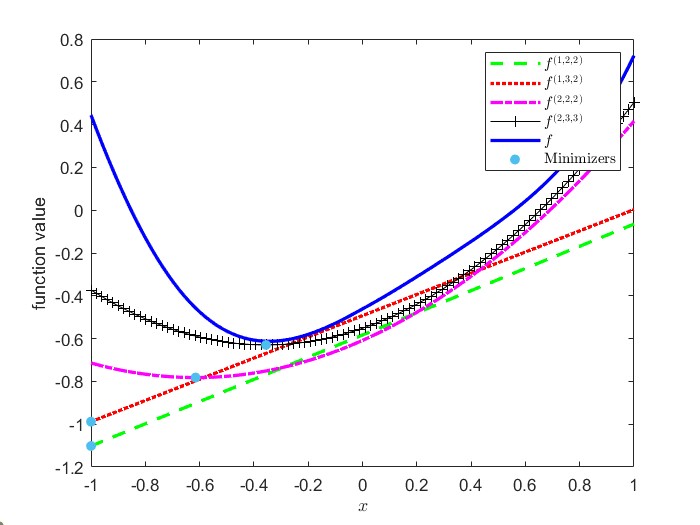}
\caption{Compute $\mathbb{E}_{\mu}[p^{(\mathbf{k})}(x,\xi)]$ and $\mathbb{E}[f_2(x,\xi)]$ 
for Example~\ref{ex:uni_uniform}, where
dashed line is $f^{(1,2,2)}$, dotted line is $f^{(1,3,2)}$, dash-dotted line is $f^{(2,2,2)}$,
plus sign line is $f^{(2,3,3)}$, solid line is $f$, and big dots are minimizers.}
\label{fig:uni_uniform}
\end{figure}
Clearly, the global minimum of $\mathbb{E}_{\mu}[p^{(\mathbf{k})}(x,\xi)]$ on $X$ increases as 
the relaxation order increases.
Denote by $f_{min}^{(\mathbf{k})}$ and $x^{\mathbf{(k)}}$ the global minimum and 
minimizer of \reff{eq:alg:lb}.
We report the computational results in the following table.
\[
\begin{array}{|c|c|c|c|c|}
\hline
\mathbf{k} & (1,2,2) & (1,3,2) & (2,2,2) & (2,3,3) \\
\hline
f_{min}^{(\mathbf{k})} & -1.1018 & -0.9883 & -0.7821 & -0.6296 \\
\hline
x^{(\mathbf{k})} & -1.0000 & -1.0000 & -0.6149 & -0.3555 \\
\hline
\end{array}
\]
From Figure~\ref{fig:uni_uniform}, one can observe that when $\mathbf{k} = (2,3,3)$, 
$x^{(\mathbf{k})} = -0.3555$ is close to the global 
optimizer of the two-stage SP.
By sample average approximations, we compute 
\[
f( -0.3555) \,\approx\, \frac{1}{100}\sum\limits_{i = 1}^{100} f_2( -0.3555,\, 0.01\cdot i)  = -0.6042,
\]
which is close to $f^{(\mathbf{k})}(-0.3555) = -0.6296$.
One can further improve the approximation quality by increasing the relaxation order.
\end{example}

\subsection{Relaxations of problem \reff{eq:maxim:int:dd}}
For ease of reference, we repeat the optimization problem \reff{eq:maxim:int:dd} below:
\[
\left\{\begin{array}{cl}
\max\limits_{p_i\in\re[x] } & \int_{\mc{F}_i} p_i(x) {\tt d}\nu_i\\
\st & F(x,y,\xi^{(i)}) - p_i(x)\in \mathscr{P}(K_i)^{x,y},
\end{array}
\right.
\]
where $\nu_i$ is a given probability measure supported on $\mc{F}_i$,
and $K_i$ is a semialgebraic set determined by
\begin{equation}\label{eq:Kipoly} 
g_1(x)\ge 0,\quad  g_2(x,y,\xi^{(i)})\ge 0.
\end{equation}
The functions $g_1,g_2(\bullet, \xi^{(i)})$ can be viewed as polynomial tuples in $(x,y)$.
Denote the quadratic module
\[
\qmod{g_1, g_2(\bullet, \xi^{(i)})}^{x,y}\,\coloneqq\, \qmod{g_1}^{x,y}
 + \qmod{g_2(\bullet, \xi^{(i)})}^{x,y}
\]
as a subset in $\re[x,y]$, where 
\[
\qmod{g_1}^{x,y}\,\coloneqq\,\sum\limits_{i\in\mc{I}_1} g_{1,i}(x)\cdot \Sigma[x,y],\quad
\qmod{g_2(\bullet, \xi^{(i)})}^{x,y}\,\coloneqq\,
\sum\limits_{j\in\mc{I}_2} g_{2,j}(x,y,\xi^{(i)})\cdot \Sigma[x,y].
\]
Let $k\ge \max\{\lceil \deg(F)/2\rceil, \lceil d_1/2\rceil\}$. 
The $k$th order SOS relaxation of \reff{eq:maxim:int:dd} is 
\begin{equation}\label{eq:kth:dis}
\left\{\begin{array}{cl}
\max\limits_{p_i\in \re[x]} & \int_{\mc{F}_i} p_i(x){\tt d}{\nu_i}\\
\st & F(x,y,\xi^{(i)}) - p_i(x)\in \qmod{g_1, g_2(\bullet,\xi^{(i)})}_{2k}^{x,y},
\end{array}
\right.
\end{equation}
where  $\qmod{g_1,g_2(\bullet,\xi^{(i)})}_{2k}^{x,y}$ denotes the $k$th order truncation 
of $\qmod{g_1,g_2(\bullet, \xi^{(i)})}^{x,y}$.
\begin{theorem}
Suppose $\qmod{g_1,g_2(\bullet,\xi^{(i)})}^{x,y}$ is archimedean and 
$f_2(x,\xi^{(i)})$ is continuous on $\mc{F}_i$. For a given measure $\nu_i$, problem 
\reff{eq:kth:dis} is solvable with an optimal solution $p_i^{(k)}(x)$ when $k$ is large enough, and
\[
\int_{\mc{F}_i} |f_2(x,\xi^{(i)})-p_i^{(k)}(x)| {\tt d}{\nu_i}\rightarrow 0
\quad\mbox{as}\quad k\rightarrow \infty.
\]
\end{theorem}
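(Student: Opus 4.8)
The plan is to run, almost verbatim, the argument used for Theorem~\ref{thm:asymconv}, with the single-scenario data: the tuple $g_2(\bullet,\xi^{(i)})$, the lifted set $K_i$ from \reff{eq:Kipoly}, the measure $\nu_i$, and the truncated quadratic module $\qmod{g_1,g_2(\bullet,\xi^{(i)})}_{2k}^{x,y}$ replacing their $(x,\xi)$-counterparts. First I would record the structural consequences of the archimedean hypothesis: since $\qmod{g_1,g_2(\bullet,\xi^{(i)})}^{x,y}$ is archimedean, the set $K_i$ is compact, each truncation $\qmod{g_1,g_2(\bullet,\xi^{(i)})}_{2k}^{x,y}$ is a closed convex cone, and $\mc{F}_i$ — being the image of $K_i$ under the projection $(x,y)\mapsto x$ — is compact as well. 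This places us exactly in the setting of Theorem~\ref{thm:gam_equal_2}.

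Next, fix $\varepsilon>0$. By Theorem~\ref{thm:gam_equal_2} there is a polynomial $p_i\in\re[x]$ feasible for \reff{eq:maxim:int:dd} with $\int_{\mc{F}_i}|f_2(x,\xi^{(i)})-p_i(x)|\,{\tt d}\nu_i\le\varepsilon$; feasibility means $F(x,y,\xi^{(i)})-p_i(x)\ge 0$ on $K_i$, hence $F(x,y,\xi^{(i)})-(p_i(x)-\varepsilon)\ge\varepsilon>0$ on $K_i$. Putinar's Positivstellensatz (applicable because the quadratic module is archimedean) then yields $F(x,y,\xi^{(i)})-(p_i(x)-\varepsilon)\in\qmod{g_1,g_2(\bullet,\xi^{(i)})}^{x,y}$, so this polynomial lies in the truncation at some finite order $k_\varepsilon$, which we may take no smaller than $\max\{\lceil\deg(F)/2\rceil,\lceil d_1/2\rceil\}$. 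Thus $p_i(x)-\varepsilon$ is feasible for \reff{eq:kth:dis} at order $k_\varepsilon$.

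It remains to argue solvability and the error estimate. At order $k_\varepsilon$ the feasible set of \reff{eq:kth:dis} is nonempty and closed, and any feasible $p_i$ of \reff{eq:kth:dis} is also feasible for \reff{eq:maxim:int:dd} (the truncated module sits inside $\mathscr{P}(K_i)^{x,y}$), so by Theorem~\ref{thm:gam_equal_2} the objective is bounded above by $\int_{\mc{F}_i}f_2(x,\xi^{(i)})\,{\tt d}\nu_i$; hence \reff{eq:kth:dis} attains its optimum at some $p_i^{(k_\varepsilon)}(x)$. Since $p_i(x)-\varepsilon$ is a candidate,
\[
\int_{\mc{F}_i}|f_2(x,\xi^{(i)})-p_i^{(k_\varepsilon)}(x)|\,{\tt d}\nu_i
\;\le\;\int_{\mc{F}_i}|f_2(x,\xi^{(i)})-(p_i(x)-\varepsilon)|\,{\tt d}\nu_i\;\le\;2\varepsilon .
\]
Finally, the nesting $\qmod{g_1,g_2(\bullet,\xi^{(i)})}_{2k}^{x,y}\subseteq\qmod{g_1,g_2(\bullet,\xi^{(i)})}_{2k+2}^{x,y}$ makes the optimal value of \reff{eq:kth:dis} nondecreasing in $k$ and always bounded above by $\int_{\mc{F}_i}f_2(x,\xi^{(i)})\,{\tt d}\nu_i$, so $k_\varepsilon\to\infty$ as $\varepsilon\to 0$ and the displayed limit follows. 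The only point needing a little care — the analogue of the one substantive step in Theorem~\ref{thm:asymconv} — is the claim that \reff{eq:kth:dis} is bounded above with a closed feasible region so that its supremum is attained; both parts reduce to compactness of $K_i$ (closedness of the truncated module) together with the upper bound from Theorem~\ref{thm:gam_equal_2}.
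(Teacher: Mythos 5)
Your proposal is correct and follows essentially the same route as the paper's proof: compactness of $K_i$ and $\mc{F}_i$ from the archimedean condition, Theorem~\ref{thm:gam_equal_2} to produce a near-optimal feasible $p_i$, Putinar's Positivstellensatz to place $F(x,y,\xi^{(i)})-(p_i(x)-\varepsilon)$ in a finite truncation, attainment plus the $2\varepsilon$ bound, and the nesting of truncations to conclude. No substantive differences from the paper's argument.
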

\begin{proof} 
Under the archimedean condition,  $K_i$ in \reff{eq:Kipoly} is compact, thus $\mc{F}_i$ is compact.
Assume $f_2(x,\xi^{(i)})$ is continuous on $\mc{F}_i$.
By Theorem~\ref{thm:gam_equal_2}, for every $\varepsilon>0$, there exists a polynomial 
$p_i(x)$ that is feasible for \reff{eq:maxim:int:dd} and 
satisfies $\int_{\mc{F}_i} |f_2(x,\xi^{(i)})-p_i(x)| {\tt d}{\nu_i}\le \varepsilon$; that is, 
\[ F(x,y,\xi^{(i)})- (p_i(x)-\varepsilon)\, \ge\, \varepsilon \,>\,0,\quad \forall (x,y)\in K_i. \] 
By Putinar's Positivstellensatz, 
$F(x,y,\xi^{(i)}) - (p_i(x)-\varepsilon)\in \qmod{g_1,g_2(\bullet,\xi^{(i)})}^{x,y}$.
So there exists $k_{\varepsilon}\in\N$ that is sufficiently large such that the polynomial 
$p_i(x)-\varepsilon$ is feasible for \reff{eq:kth:dis} at the $k_{\varepsilon}$th relaxation. 
At the $k_{\varepsilon}$th relaxation, \reff{eq:kth:dis} is bounded from above and 
has a nonempty closed feasible set,  so it is solvable with an optimizer $p_i^{(k_{\varepsilon})}(x)$.
Then we have 
\[
\int_{\mc{F}_i} |f_2(x,\xi^{(i)}) - p_i^{(k_{\varepsilon})}(x)|{\tt d}\nu_i\,\le\,
\int_{\mc{F}_i} |f_2(x,\xi^{(i)}) - (p_i(x)-\varepsilon)| {\tt d}{\nu_i}\,\le\, 2\varepsilon.
\]
Since $\qmod{g_1,g_2(\bullet,\xi^{(i)})}^{x,y}_{2k}\subseteq 
\qmod{g_1,g_2(\bullet,\xi^{(i)})}^{x,y}_{2k+2}$ for every $k$,
the optimal value of \reff{eq:kth:dis} increases monotonically as the relaxation order grows.
In other words, $k_{\varepsilon}\rightarrow \infty$ as $\varepsilon\rightarrow 0$.
So the conclusion holds.
\end{proof}

\begin{example}\label{ex:dis_rel}
Consider the two-stage SP as in \reff{eq:2stageSP} with $x,\xi\in\re$, $f_1(x) = 0$, $y\in\re^2$ and 
\[
S =\{\xi^{(1)}, \xi^{(2)}\} = \{-0.1,0.2\},\quad 
X = \{x\in\re:x(1-x)\ge 0\}.
\]
The second-stage optimization problem  is given as
\begin{equation}
\label{eq:dis_rel}
\left\{\begin{array}{rl}
f_2(x,\xi) = \min\limits_{y\in\re^2} &  x^2y_1+\xi xy_2\\
\st & y_1-\xi \ge 0,\, y_2\ge 0,\\
& x-y_1-y_2 \ge 0.
\end{array}
\right.
\end{equation}
Clearly, $\mc{F}_1 = X = [0,1]$ and $\mc{F}_2 = [0.2,1]$.
Since the second-stage optimization problem is linear in $y$, we can analytically solve
the recourse function at each  realization as
\[
f_2(x,\xi^{(1)}) \,=\, -0.2x^2-0.01x,\quad
f_2(x,\xi^{(2)}) \,=\, 0.2x^2.
\]
Select $\nu_1,\nu_2$ as uniform probability measures supported on $\mc{F}_1,\mc{F}_2$, respectively.
We solve \reff{eq:kth:dis} with initial relaxation order $k=2$.
The computed lower approximating functions are
\[
\begin{array}{l}
p_1^{(2)}(x) \,=\, -0.0004-0.0066x-0.2112x^2+0.0150x^3-0.0069x^4,\\
p_2^{(2)}(x) \,=\, -0.0004+0.0028x+0.1926x^2+0.0084x^3-0.0034x^4.
\end{array}
\]
They provide reasonably good approximations of the true recourse function. 
In fact, we have 
\[
\sup_{x\in X} | f_2(x,\xi^{(1)}) - p_1^{(1)}(x)| \le 4\cdot 10^{-4},\quad
\sup_{x\in X}  | f_2(x,\xi^{(2)}) - p_2^{(2)}(x) | \le 7\cdot 10^{-5}.
\]
Therefore,  for an arbitrary probability measure 
$\mu = \lambda_1\delta_{-0.1}+(1-\lambda_2)\delta_{0.2}$ with $\lambda\in[0,1]$, 
the recourse approximation 
$\tilde{f}(x)\coloneqq \lambda_1\cdot p_1^{(2)}(x) +(1-\lambda)\cdot p_2^{(2)}(x)$ satisfies
\[
\sup_{x\in X} |f(x) - \tilde{f}(x)|\le 4\cdot 10^{-4}.
\]
\end{example}

\subsection{Solving the first-stage problem}
\label{ssc:polyopt}
In this subsection, we discuss how to replace the recourse function $f_2(x,\xi)$ by the approximating polynomial function $p(x,\xi)$ in the  two-stage SP \reff{eq:2stageSP}, and solve the first-stage problem to global optimality. 
Let  $p(x,\xi)$ be a selected polynomial lower approximating function of $f_2(x,\xi)$.
The two-stage SP \reff{eq:2stageSP} can be approximated by the polynomial
optimization problem in \reff{eq:alg:lb}, which takes the form of
\[\left\{\begin{array}{cl}
\min\limits_{x\in \re^{n_1}} & \tilde{f}(x)\coloneqq f_1(x)+\mathbb{E}_{\mu}[p(x,\xi)]\\
\st & g_1(x)\ge 0,
\end{array}\right.
\]  
where $g_1(x) = (g_{1,i}(x))_{i\in\mc{I}_1}$ is the polynomial tuple given as in \reff{eq:g1g2}.
The above problem can be solved globally by Moment-SOS relaxations.
Denote 
\begin{equation}\label{eq:d3}
d_3\,\coloneqq\, \max\big\{ \deg\big(\tilde{f}\big),\quad \deg(g_1) \big\}.
\end{equation}
For $k\in\N$ such that $2k\ge d_3$, the $k$th order SOS relaxation of \reff{eq:alg:lb} is
\begin{equation}\label{eq:max:sos}
\left\{\begin{array}{cl}
\max\limits_{\gamma\in\re} & \gamma\\
\st & \tilde{f}(x)-\gamma\in \qmod{g_1}_{2k}^x,
\end{array}
\right.
\end{equation}
where $\qmod{g}_{2k}^x$  denotes the $k$th order truncation of 
\[
\qmod{g_1}^x\,\coloneqq\, \sum\limits_{i\in\mc{I}_1} g_{1,i}(x)\cdot \Sigma[x].
\]
The dual problem of \reff{eq:max:sos} is the $k$th order moment relaxation
of \reff{eq:alg:lb}, which is
\begin{equation}\label{eq:min:mom}
\left\{
\begin{array}{cl}
\min\limits_{z\in\re^{\N_{2k}^{n_1}}} & \langle \tilde{f}, z\rangle\\
\st & z_0 = 1,\, M_k[z]\succeq 0,\\[0.05in]
&  L_{g_{1,i}}^{(k)}[z]\succeq 0\,(i\in\mc{I}_1).
\end{array}
\right.
\end{equation}
In the above, $M_k[z]$ and each $L_{g_{i,i}}^{(k)}[z]$
 are moment and localizing matrices defined as in \reff{eq:Lq[z]}.
For each $k$, the optimization problems \reff{eq:max:sos}--\reff{eq:min:mom}
are semidefinite programming problems. 
Suppose $\tilde{f}_0$ is the optimal value of \reff{eq:alg:lb} and
$\tilde{f}_k$ is the optimal value of \reff{eq:min:mom} at $k$th relaxation order.
Under the archimedean condition of $\qmod{g_1}^x$, the dual pair
\reff{eq:max:sos}--\reff{eq:min:mom} has the asymptotic convergence (see \cite{Las01})
\[
\tilde{f}_k \le \tilde{f}_{k+1} \le \cdots \le \tilde{f}_0\quad
\mbox{and}\quad \lim_{k\to \infty} \tilde{f}_k = \tilde{f}_0.
\]
Interestingly, the finite convergence, i.e., $\tilde{f}_k = \tilde{f}_0$ for $k$
large enough, holds when $\tilde{f}, g_1$ are given by generic polynomials.
It can be verified by a convenient rank condition called {\it flat truncation} \cite{nie2014optimality}.
Suppose $z^*$ is an optimizer of \reff{eq:min:mom} at the $k$th relaxation.
If there exists $t\in [d_3, k]$ such that
\[
\rank\, M_{t-d_3}[z^*] \, =\, \rank\, M_{t}[z^*],
\]
then \reff{eq:min:mom} is a tight relaxation of \reff{eq:alg:lb}.
In this case, problem \reff{eq:alg:lb} has $\rank\, M_{t}[z^*]$ number of global optimal solutions.
These optimal solutions can be extracted via Schur decompositions \cite{HenLas05}. 
We refer to \cite{Las09book,nie2014optimality,NieBook} for detailed study of polynomial optimization.

\section{Numerical Experiments}
\label{sc:numexp}

In this section, we demonstrate the effectiveness of our method through numerical experiments.
The computations were carried out  in MATLAB R2023a
 on a laptop equipped with an 8th Generation Intel\textregistered Core\texttrademark  i7-12800H CPU and 32 GB RAM.
The computations were implemented with the {\tt MATLAB} software {\tt Yalmip} \cite{Yalmip}, {\tt Mosek} \cite{mosek}
{\tt GloptiPoly 3} \cite{gloptipoly}, and {\tt SeDuMi} \cite{sedumi}.
For clarity, computational results are reported to four decimal places.

In Algorithms~\ref{def:alg} and~\ref{def:alg:dis}, all optimization problems are solved using 
Moment-SOS relaxations. 
For the linear conic optimization problem \reff{eq:maxim:int}, 
we choose a specific relaxation order $\mathbf{k} = (k_1,k_2,k)$ to compute the lower approximating function
$p(x,\xi)$ from problem \reff{eq:polysos:veck}.
For the optimization problem \reff{eq:lbf:dis}, we select a prescribed relaxation order $k$  to determine the
the lower approximating function $p_i(x)$ from \reff{eq:kth:dis}.
The polynomial optimization problem \reff{eq:alg:lb} is globally solved using a hierarchy of semidefinite relaxations, as detailed in \reff{eq:min:mom}.

For the sake of simplicity, we denote the computed lower approximating function for $f(x)$ at the $t$th iteration as $\tilde{f}_t(x)$, with $\tilde{f}_t$ and $\tilde{x}^{(t)}$ representing the global optimal value and the solution obtained from \reff{eq:alg:lb} in the corresponding iteration. We use \texttt{diff} to denote the gap between the upper and lower bounds (i.e., $v^+-v^-$) at each iteration.

First, we consider a synthetic example where the recourse function has an explicit analytical expression.
\begin{example}\label{ex:graphill}
Consider the two-stage SP 
\begin{equation}\label{eq:graphill}
\left\{\begin{array}{cl}
\min\limits_{x\in\re^2} & 2x_1x_2^2-x_1^2+\mathbb{E}_{\mu}[f_2(x,\xi)]\\
\st & 1-x_1^2-x_2^2\ge 0,
\end{array}
\right.
\end{equation}
where $\xi\in\re$ is univariate and $f_2(x,\xi)$ is the optimal value function of the problem
\[
\left\{
\begin{array}{cl}
\min\limits_{y\in\re} & x_2y\\
\st & x_1-2\xi\le y\le x_1+\xi. 
\end{array}
\right.
\]
Since the second-stage problem is linear in $y$ with box constraints, 
one can obtain the following analytical expression of the recourse function:
\[
f_2(x,\xi) = \left\{\begin{array}{ll}
x_2(x_1-2\xi) & \mbox{if $x_2\ge 0$},\\[0.05in]
x_2(x_1+\xi) & \mbox{if $x_2\le 0$}.
\end{array}
\right.
\]
Clearly, $f_2(x,\xi)$ is continuous but nonconvex, and is not a polynomial.
Assume $\mu$ is a probability measure with the support $S  = [0,1]$, moments 
$\mathbb{E}_{\mu}[\xi] = 0.6$ and $
\mathbb{E}_{\mu}[\xi^2]=0.5$.
Then we can find an explicit expression of the overall objective function
\[
f(x) = 2x_1x_2^2-x_1^2+\mathbb{E}_\mu[f_2(x,\xi)] = \left\{\begin{array}{ll}
2x_1x_2^2-x_1^2+x_1x_2-1.2x_2 & \mbox{if $x_2\ge 0$},\\[0.05in]
2x_1x_2^2-x_1^2+x_1x_2+0.6x_2 & \mbox{if $x_2\le 0$}.
\end{array}\right.
\]
One can get the following global optimal solution  and the optimal value of \reff{eq:graphill} by solving two
polynomial optimization problems with Moment-SOS relaxations:
\[
x^* = (-0.6451, 0.7641)^T,\quad f^* = -2.5793.
\]
Now we apply Algorithm~\ref{def:alg} to solve this problem, and compare our results with 
the above true solution. Clearly, $\mc{F} = X\times S$.
Select $\alpha = 0.1$, $\epsilon = 0.001$, and let $\nu$ be the uniform probability measure 
supported on $\mc{F}$. 
For the relaxation order $\mathbf{k} = (2,2,2)$, Algorithm~\ref{def:alg} terminates at the 
initial loop $t= 1$ with the computed objective approximation
\[\tilde{f}_1(x)\,=\,2x_1x_2^2-x_1^2+(-0.6171x_2^2+x_1x_2-0.3000x_2-0.3281).\]
By solving optimization problem \reff{eq:alg:lb},
we obtain the following candidate solution and the corresponding lower bound for the optimal objective value
\[
\tilde{x}^* = (-0.6417,0.7670)^T,\quad
\tilde{f}^* = -2.5801.
\]
Since $f^* = -2.5792$, the gap ${\tt diff} = \tilde{f}^* - \tilde{f}^* = 8.8310\cdot 10^{-4}<0.001$.
Compared to the true optimizer and the optimal value, the computed polynomial lower approximating function, 
even with a low degree,  provides a good approximation.
\end{example}

In the next example,  we show that by increasing the relaxation order, one can
 improve the approximation quality of the polynomial lower approximating functions.
 
\begin{example}\label{ex:x1y2_sim}
Consider the two-stage SP as in \reff{eq:2stageSP} with $x,\xi\in\re$,
$f_1(x) = 0$,  $y\in\re^2$, 
$\mu\sim\mc{U}(S)$ and
\[
X = \{x\in\re: 1-x^2\ge 0\},\quad S = \{\xi\in\re: \xi(1-\xi)\ge 0\},
\]
where $\mu\in\mc{U}(S)$ denotes the uniform probability measure.
The second-stage problem is given by 
\[
\left\{
\begin{array}{rl}
f_2(x,\xi) = \min\limits_{y\in\re^2} & xy_1+2xy_2\\
\st & y_1-x-\xi\ge 0,\\
& y_2-x+\xi\ge 0,\\
& 2x+3\xi-y_1-y_2\ge 0.
\end{array}
\right.
\]
Clearly, the second-stage optimization problem is feasible for every $x\in X$ and $\xi\in S$,
so we have $\mc{F} = X\times S$.
Select $\alpha = 0.1$, $\epsilon = 0.1$, and let $\nu$ be the uniform probability measure 
supported on $\mc{F}$. Apply Algorithm~\ref{def:alg} to this problem.
We consider two different relaxation orders $
\mbox{(i)}\, \mathbf{k} = (2,4,3)$;
$\mbox{(ii)}\, \mathbf{k} = (4,4,4)$.

\vskip 0.05in
\noindent
(i) When $\mathbf{k} = (2,4,3)$, Algorithm~\ref{def:alg} terminates at the loop $t=4$. 
We record the computed polynomial objective approximations in each loop below:
\[\begin{array}{ll}
\tilde{f}_1(x) = -0.4330+1.0000x+1.7010x^2, &
\tilde{f}_2(x) = -0.2500+1.0000x+0.7498x^2,\\
\tilde{f}_3(x) = -0.4330+1.0000x+1.7009x^2, &
\tilde{f}_4(x) = -0.2586+1.0000x+0.8248x^2.
\end{array}
\]
The computed solutions and lower/upper bounds for the optimal values at each iteration are 
listed in Table~\ref{tab:ex:x1y2_sim}.
\begin{table}[htb!]
\footnotesize
\caption{Computational results with $\mathbf{k} = (2,4,3)$ for Example~\ref{ex:dis_rel}}
\label{tab:ex:x1y2_sim}
\centering
\begin{tabular}{|c|c|c|c|c|}
\hline
$t$ &  $1$ & $2$ & $3$ & $4$ \\ \hline
$\tilde{x}^{(t)}$ & $-0.2939$ & $-0.6668$ & $-0.2939$ & $-0.6062$\\ \hline
$\tilde{f}_t(\tilde{x}^{(t)})$ & $-0.5800$ & $-0.5834$ & $-0.5800$ & $-0.5617$ \\ \hline
$f(\tilde{x}^{(t)})$ & $-0.4756$ & $-0.3331$ & $-0.4756$ & $-0.4131$\\ \hline
$\tt diff$ & $0.1044$ & $0.1044$ & $0.1044$ & $0.0861$\\
\hline
\end{tabular}
\end{table}
To evaluate $f(\tilde{x}^{(t)})$, we solve the second-stage optimization problem  by Moment-SOS 
relaxations and use the sample average of 
$\{f_2(\bullet,0.01\cdot i)\}_{i\in [100]}$.
The output solution and the best lower bound of the optimal value are
\[
\tilde{x}^* \,=\, \tilde{x}^{(1)}\,=\, -0.2939,\quad 
\tilde{f}^* \,=\, \tilde{f}_4(\tilde{x}^{(4)}) \,=\, -0.5617.
\]

\noindent
(ii) When $\mathbf{k} = (4,4,4)$, Algorithm~\ref{def:alg} terminates at the initial loop 
$t = 1$ with the polynomial objective approximation
\[
\tilde{f}_1(x) \,=\, -0.3035+1.0000x+1.0034x^2+0.7999x^4.
\]
By solving optimization problem \reff{eq:alg:lb}, we get the solution  and the lower bound of the optimal value 
\[
\tilde{x}^* = \tilde{x}^{(1)} \,=\,  -0.3979,\quad \tilde{f}^* = \tilde{f}_1(\tilde{x}^{(1)}) \,=\, -0.5225.
\]
We again evaluate $f(\tilde{x}^*)$ by the sample average of $\{f_2(\bullet,0.01\cdot i)\}_{i\in [100]}$ and obtain
\[
f(\tilde{x}^*) = -0.5198,\quad {\tt diff} = f(\tilde{x}^{(1)}) - \tilde{f}_1(\tilde{x}^{(1)}) = 0.0027<0.1.
\]
Compared to the previous case, it is clear that the increase of the relaxation order leads to a better 
polynomial approximation and a smaller gap between the upper and lower bounds of objective values.

An important usage of the above computed lower bounds of the objective value is to certificate the  quality of 
a (local) solution obtained by other methods. To illustrate this, we consider the solutions computed by the decomposition algorithm 
proposed in \cite{LiCui22} to solve the current example,  with the same parameters selected in the
reference. The latter method is only guaranteed to compute a properly defined first-order stationary point and it is likely that the computed objective value is far from globally optimal. We consider $100$ scenarios over $5$ independent replications and select the initial point $x = 0$.
The computational results are reported in the following table.
\begin{table}[htb]
\caption{Computational results with the decomposition algorithm}
\label{tab:dcp_meth}
\centering
\begin{tabular}{|l|c|c|c|c|c|}
\hline
Test number & $1$ & $2$ & $3$ & $4$ & $5$\\ \hline
 Output point & $-0.4200$ & $-0.4438$ & $-0.4091$ & $-0.3926$ & $-0.4147$\\ \hline
 Output value & $-0.5042 $ & $-0.5627$ & $-0.4784$ & $-0.4406$ & $-0.4915$\\ \hline
\end{tabular}
\end{table}

In the above table, 
the output objective value $-0.5042$ from Test 1 is the closest to  our computed 
lower bound  $f(\tilde{x}^*) = -0.5225$. This may suggest that the computed objective value in this test is close 
to the true globally optimal value of the two-stage SP, and the output point $x = -0.4200$ can be viewed as an approximate  global  solution.

In addition, we plot the expected recourse function $f$ (evaluated via sample averages) and computed polynomial 
lower bound functions in Figure~\ref{fig:uni_x1y2}.
The left subfigure is for the case $\mathbf{k} = (2,4,3)$, and the right subfigure is for the
case $\mathbf{k} = (4,4,4)$. In both subfigures, $f$ is plotted with solid lines and
$\tilde{f}_1$ is plotted with dashed lines. In the left panel, $\tilde{f}_2$ is plotted with the dotted line, 
$\tilde{f}_4$ is plotted with the dash-dotted line.
\begin{figure}[htb!]
\centering
\includegraphics[width=0.5\textwidth]{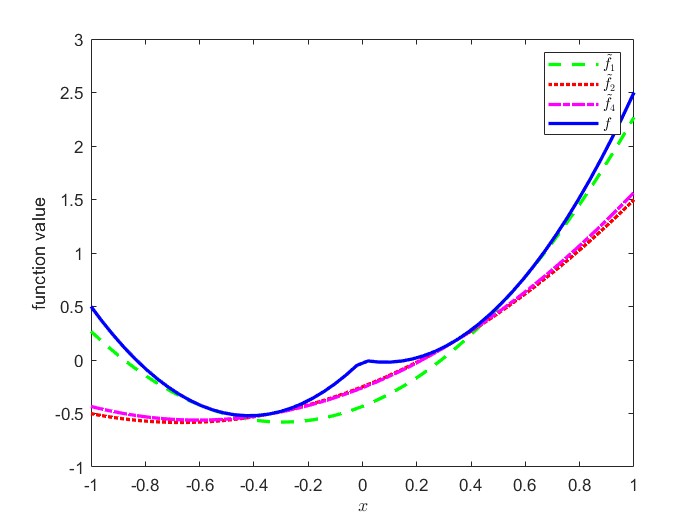}\hfill
\includegraphics[width=0.5\textwidth]{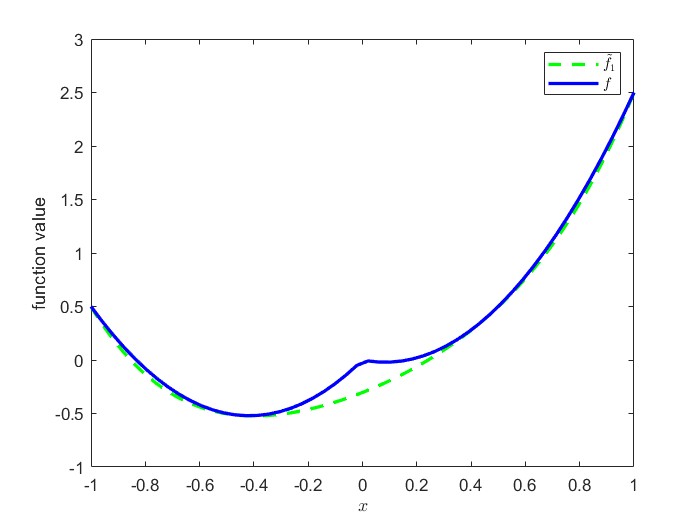}
\caption{The recourse function and its approximations for Example~\ref{ex:x1y2_sim}.
The left panel is for $\mathbf{k} = (2,4,3)$, and the right panel is for $\mathbf{k} = (4,4,4)$.
The  dashed line is for $\tilde{f}_1$, dotted line is for $\tilde{f}_2$, dash-dotted line is for $\tilde{f}_4$ 
and the solid line is for $f$.}
\label{fig:uni_x1y2}
\end{figure}
It can be observed that the polynomial approximation with order $\mathbf{k} = (4,4,4)$ 
also gives a better approximation to the true optimizer compared to the case $\mathbf{k} = (2,4,3)$.
On the other hand, a small increase of the relaxation order can heavily enlarge the dimension of 
the corresponding linear conic optimization problem \reff{eq:polysos:k}.
For the case $\mathbf{k} = (2,4,3)$, there are $210$ scalar variables, $7$ matrix variables equivalent to $1350$ scalar variables when scalarized, and 
$1365$ constraints. 
In contrast,  for the case where $\mathbf{k} = (4,4,4)$, there are $495$ scalar variables, 
$7$ matrix variables which scalarize to $6265$ variables, and $6290$ constraints.
\end{example}

Next we apply 
Algorithm~\ref{def:alg} to a problem of a larger scale, where the second-stage variable $y\in \mathbb{R}^{10}$. 
In this example, the polynomial lower approximating functions again yield a high-quality solution 
with relatively low degrees.
\begin{example}\label{ex:bilinear}
Consider the two-stage SP 
\[
\left\{\begin{array}{cl}
\min\limits_{x\in\re^2} & x_1x_2+\mathbb{E}_{\mu}[f_2(x,\xi)]\\
\st & 1-x_1^2-x_2^2\ge 0,
\end{array}
\right.
\]
where $\xi\in S=[0,1]$ follows a  uniform distribution on $S$, 
and $f_2(x,\xi)$ is the optimal value function of the problem 
(here $e = (1,\ldots, 1)^T\in\re^{n_2}$ is the vector of all ones)
\[
\left\{\begin{array}{cl}
\min\limits_{y\in\re^{10}} & \|y\|^2-y_1^2-\xi\cdot e^Ty\\
\st & (x_2+2)y_1-x_1+2\xi\ge 0,\\
& 2+x_2+(x_1-2)y_2\ge 0,\\
& 10-x_1-e^Ty\ge 0,\\ 
& y_i\ge 0, \,i = 2,\ldots, 9.
\end{array}
\right.
\]
Then $\mc{F} = X\times S$ since the second-stage problem is feasible for every $x\in X$ and $\xi\in S$.
Now we apply Algorithm~\ref{def:alg} to this problem.
We select $\alpha = 0.1$, $\epsilon = 0.06$, and let $\nu$ be the uniform
probability measure supported on $\mc{F}$. 
Denote by $\tilde{p}_t(x)$ the computed lower bound function for 
$f_2(x,\xi)$ at the $t$th loop.
For the degree bound $\mathbf{k} = (2,2,2)$, 
we obtain polynomial objective approximations
\[
\begin{array}{l}
\tilde{f}_1(x) \,=\, x_1x_2+(-5.0868+0.4978x_1
   -0.0023x_2-0.0069x_1^2-0.0113x_2^2),\\[0.05in]
\tilde{f}_2(x) \,=\, x_1x_2+(-5.0894+0.4730x_1
    +0.0310x_2-0.0192x_1^2\\
    \qquad \qquad \qquad\qquad  -0.0434x_1x_2-0.0477x_2^2).
\end{array}
\]
By solving optimization problem \eqref{eq:alg:lb}, 
we get optimal solutions for each approximation and corresponding lower/upper bounds for the optimal value:
\[\begin{array}{lll}
\tilde{x}^{(1)} \,=\,  (-0.8033,0.5956)^T, & \tilde{f}_1(\tilde{x}^{(1)}) \,=\, -5.9750, &
f(\tilde{x}^{(1)}) \,=\, -5.8801,\\
\tilde{x}^{(2)} \,=\,  (-0.8037,0.5950)^T, & \tilde{f}_2(\tilde{x}^{(2)}) \,=\, -5.9379, &
f(\tilde{x}^{(2)}) \,=\, -5.8801.
\end{array}\]
In the above, each $f(\tilde{x}^{(t)})$ is approximated by the sample average of 
$\{f_2(\bullet,0.01\cdot i)\}_{i\in [100]}$.
Since 
\[
{\tt diff} \,=\, f(\tilde{x}^{(2)}) - \tilde{f}_2(\tilde{x}^{(2)}) \,=\, 0.0578 \,<\, 0.06,
\]
we have that Algorithm~\ref{def:alg} terminates at the loop $t = 2$.
\end{example}

Our last test example is 
a joint shipment planning and pricing problem, 
which can be modeled in the form of two-stage SP \cite{LiuCui20}. 
\begin{example}\label{ex:application}
Consider one product in a network consisting of $M$ factories and $N$ retailer stores.
For each $i\in [M]$, factory $i$ has an initial schedule to produce
the product with amount $x_i$ at cost $c_{1i}$ per unit,
and it may allow additional production with amount $y_i$ at cost 
$c_{2i}>c_{1i}$ per unit.
In addition, to ship a unit of item from factory $i$ to store $j$ cost $s_{ij}$.
Let $x_0$ denote the product price and $z_{ij}$ denote the product amount shipped from factory $i$ to store $j$. 
The goal is to fulfill the demand with the lowest cost.
Suppose the demand is linearly dependent on the price $x_0$ and some 
random vectors $\xi = (\xi_1,\ldots,\xi_{n_0})$.
In addition, suppose there exist highest price and production limits. 
That is, there are scalars $d_0, d_{1,i},d_{2,i}>0$ such that $x_0\le d_{0}$
and $x_i\le d_{1,i},\, y_i\le d_{2,i}$ for every $i\in[M]$.
Let 
\[ c_j = (c_{j,1},\ldots, c_{j,M})^T,\quad 
d_j = (d_{j,1},\ldots, d_{j,M})^T,\quad j = 1,2.
\]
The shipment planning problem can be formulated as
\[\left\{\begin{array}{cl}
\min\limits_{x_0\in\re} & \mathbb{E}_{\mu}[f_2(x_0,\xi)]\\
\st & d_0\ge x_0\ge 0,
\end{array}\right.\]
where $f_2(x_0,x,\xi)$ is the optimal value of
\[
\left\{\begin{array}{cl}
\min\limits_{(x,y,z)} & c_1^Tx + c_2^Ty + 
\sum\limits_{i=1}^M\sum\limits_{j=1}^N(s_{ij}-x_0)z_{ij}\\[0.05in]
\st & a_j(\xi)x_0+b_j(\xi)-\sum\limits_{i = 1}^M z_{ij}\ge 0,\,\forall j\in[N],\\[0.05in]
 & x_i+y_j - \sum\limits_{j = 1}^N z_{ij}\ge 0,\, \forall i\in[M],\\[0.05in]
 & d_1\ge x\ge 0,\, x = (x_1,\ldots, x_M)\in\re^M\\[0.05in]
 & d_2\ge y\ge0,\, y = (y_1,\ldots, y_M)\in\re^{M},\\[0.05in]
 & z\ge 0,\, z = (z_{ij})_{i\in[M],j\in[N]}\in\re^{M\times N}.
\end{array}
\right.
\]
Up to a proper scaling, suppose the parameters are selected as
\[
\begin{array}{lllllll}
\hline
M = 2, & N = 3, & d_0 = 1, & d_{1,1} = 1, & d_{1,2} = 1, & d_{2,1} = 1\\
d_{2,2} = 1 & c_{1,1} = 0.2, & c_{1,2} = 0.2, & c_{2,1} = 0.44, & c_{2,2} = 0.46, & s_{1,1} = 0.1,\\
s_{1,2} = 0.2, & s_{1,3} = 0.3,& s_{2,1} = 0.3, & s_{2,2} = 0.2, & s_{2,3} = 0.1.\\
\hline
\end{array}
\]
(i) Consider $\xi = (\xi_1,\xi_2)$ whose probability measure $\mu$ follows the 
truncated standard normal distribution supported on $S = [0,1]^2$.
We set
\[\begin{array}{lll}
\hline
a_1(\xi) \,=\, -2\xi_1, & a_2(\xi) \,=\, -2.5(\xi_1+0.01), & a_3(\xi) \,=\, -3\xi_1-0.06,\\
b_1(\xi) \,=\, 0.5\xi_2+3, & b_2(\xi) \,=\, 0.7\xi_2+4, & b_3(\xi) \,=\, -0.1\xi_2+5.\\
\hline
\end{array}
\]
Now we apply Algorithm~\ref{def:alg} to this problem.
We generate $500$ independent samples following the distribution  $\mu$.
Select $\alpha = 0.1$, $\epsilon = 0.3$ and let $\nu$ be the Cartesian product of $\mu$ 
and the uniform probability measure supported on $X$.
For the relaxation order $\mathbf{k} = (2,2,2)$,
Algorithm~\ref{def:alg} terminates at the loop $t=3$. To improve the approximation, 
we execute  two more iterations, and obtain the following objective approximations:
\[
\begin{array}{l}
\tilde{f}_1(x_0) \,=\, 
-0.05386+0.8499x_0-3.3528x_0^2, \\
\tilde{f}_2(x_0) \,=\, -0.5393x_0-1.7613x_0^2,\\
\tilde{f}_3(x_0) \,=\, -0.5943x_0-1.7057x_0^2, \\
\tilde{f}_4(x_0)\,=\,-0.0019-0.5958x_0-1.7023x_0^2\\
\tilde{f}_5(x_0) \,=\, -0.2300-0.2653x_0-1.8046x_0^2.
\end{array}
\]
We report the computational results in Table~\ref{tab:ex:shipment} and plot the expected 
recourse function and its approximations in the left subgraph of Figure~\ref{fig:shipmement}.
In the figure, $f$ is plotted in the solid line, $\tilde{f}_1$ is plotted in the dashed line, 
$\tilde{f}_2$ is plotted in the dotted line and $\tilde{f}_5$ is plotted in the dash-dotted line.
\begin{table}[htb!]
\caption{Computational results with $\mathbf{k} = (2,2,2)$ for Example~\ref{ex:application}}
\label{tab:ex:shipment}
\centering
\begin{tabular}{|c|c|c|c|c|c|}
\hline
$t$ &  $1$ & $2$ & $3$ & $4$ & $5$\\ \hline
$\tilde{x}_0^{(t)}$ & $1.0000$ & $1.0000$ & $1.0000$ & $1.0000$ & $1.0000$\\ \hline
$\tilde{f}_t(\tilde{x}_0^{(t)})$ & $-2.5568$ & $-2.3005$ & $-2.3000$ & $-2.3000$ & $-2.2999$ \\ \hline
$f(\tilde{x}_0^{(t)})$ & $-2.1000$ & $-2.1000$ & $-2.1000$ & $-2.1000$ & $-2.1000$\\ \hline
${\tt diff}$ & $0.4580$ & $0.2018$ & $0.2012$ & $0.2012$ & $0.2011$\\
\hline
\end{tabular}
\end{table}

\noindent
(ii) Consider the situation that  $a_j(\xi)$ and $b_j(\xi)$ have the following finite realizations with equal probabilities:
\[\begin{array}{lll}
\hline
a_1(\xi)\in\{-0.5,-2\} & a_2(\xi)\in\{-3\} & a_3(\xi)\in\{-1,-3\}\\
b_1(\xi)\in\{3\}, & b_2(\xi)\in\{4,7\}, & b_3(\xi)\in\{5\}\\
\hline
\end{array}
\]
We apply Algorithm~\ref{def:alg:dis} to this problem.
Select $\alpha = 0.1$, $\epsilon = 0.3$ and let each $\nu_i$ be the uniform probability measure supported on $X$. 
For the relaxation order $k = 4$, Algorithm~\ref{def:alg:dis} terminates at the loop $t=2$ with the following objective approximations:
\[
\begin{array}{l}
\tilde{f}_1(x_0) \,=\,-0.2981+2.9749x_0-8.0524x_0^2+3.0694x_0^3,\\
\Tilde{f}_2(x_0) \,=\, -0.3001+2.9921x_0-8.0952x_0^2+3.1002x_0^3.
\end{array}
\]
The output solution and the corresponding bounds for the optimal value are
\[
\tilde{x}_0^* = 1.0000,\quad \tilde{f}^* = -2.3030,\quad f(\Tilde{x}^*) = -2.0500.
\]
The gap ${\tt diff} = f(\tilde{x}^*) - \tilde{f}^* = 0.2530<0.3$.
We plot the expected recourse function and its polynomial approximation in right subgraph 
of Figure~\ref{fig:shipmement}.
In the figure, $f$ is plotted in the solid line, $\tilde{f}_1$ is plotted in the dashed line.
\begin{figure}[htb!]
\centering
\includegraphics[width=0.5\textwidth]{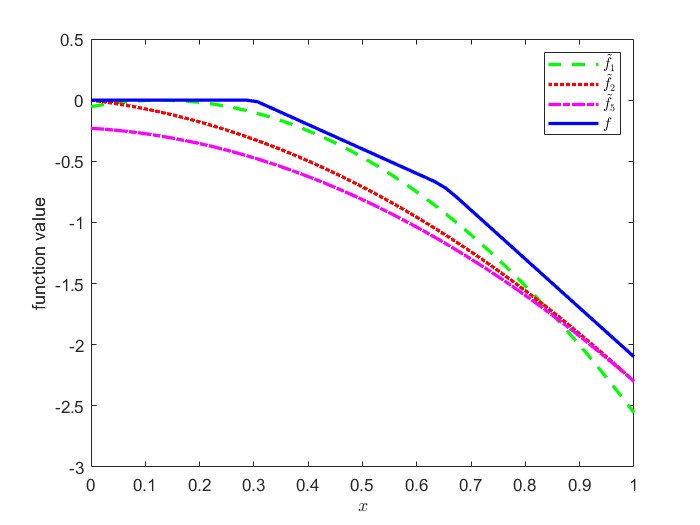}\hfill
\includegraphics[width=0.5\textwidth]{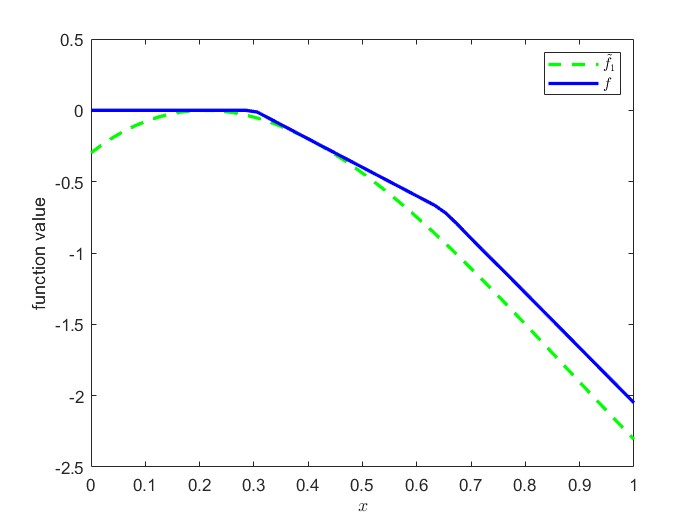}
\caption{The recourse function and its approximations for Example~\ref{ex:application}.
The left is for case (i) and the right is for case (ii).
In both subgraphs, the  dashed line is for $\tilde{f}_1$ and and the solid line is for $f$.
Particularly in the left, dotted line is for $\tilde{f}_2$ and dash-dotted line is for $\tilde{f}_5$. 
}
\label{fig:shipmement}
\end{figure}
It is clear that our polynomial approximating bound functions provide good approximations to the true objective function.
\end{example}

\section{Conclusions}
\label{sec:con}
In this paper, we have explored a novel computational  method for computing global optimal solutions of two-stage stochastic programs through polynomial optimization. Our proposed method hinges on the computation of the polynomial lower bound  of the recourse function.
These lower bound functions can be determined by the solutions of a sequence of linear conic optimization problems, 
where the size of the decision variable does not depend on the number of scenarios in the second stage problem.
The approach presents significant computational advantages.
It can identify a tight lower bound for the global optimal value of \reff{eq:2stageSP}, 
which can be used to certify the global optimality of a candidate solution obtained by other methods.
Furthermore, our method is notably effective when the random variables follow empirical distributions with a large number of scenarios or continuous distributions.
In the future, we plan to further explore the structure of the two-stage stochastic problems so that our proposed  approach can be used to solve large-scale problems more efficiently.
We also aim to improve the efficiency of polynomial lower approximating functions, particularly for those with low degrees.
In addition, we anticipate that our proposed approach can be generalized to cases where the distribution of $\xi$ depends
on $x$. We plan to explore this as future work.

\section*{Acknowledgments}
The project was partially done when the authors attended the SQuaRE program hosted by the American Institute for Mathematics (AIM). The authors thank AIM for providing a supportive and collaborative environment. The authors are also grateful to the associate editor and reviewers for their constructive suggestions, which has helped to improve the manuscript.

\bibliographystyle{siamplain}

\end{document}